\newenvironment{proof}{\emph{Proof.}}{\hfill $\square$ \\}
\newtheorem{algo}[thm]{Algorithm}
\newtheorem{example}[thm]{Example}
\renewcommand\labelenumi{(\roman{enumi})}
\renewcommand\theenumi\labelenumi
\begin{document}

\begin{frontmatter}

\title{A Dual Geometric Test for Forward-Flatness} 

\author{Bernd Kolar\corauthref{cor}}\ead{bernd.kolar@jku.at},    
\author{Johannes Schrotshamer}\ead{johannes.schrotshamer@jku.at},  
\author{Markus Sch{\"o}berl}\ead{markus.schoeberl@jku.at}              

\corauth[cor]{Corresponding author.}

\address{Institute of Control Systems, Johannes Kepler University, Linz, Austria}

\begin{keyword}                           
Difference flatness; Differential-geometric methods; Discrete-time systems; Feedback linearization; Nonlinear control systems; Normal forms.               
\end{keyword}                             

\begin{abstract}                          
Forward-flatness is a generalization of static feedback linearizability and a special case of a more general flatness concept for discrete-time systems. Recently, it has been shown that this practically quite relevant property can be checked by computing a unique sequence of involutive distributions which generalizes the well-known static feedback linearization test.
In this paper, a dual test for forward-flatness based on a unique sequence of integrable codistributions is derived. Since the main mathematical operations for determining this sequence are the intersection of codistributions and the calculation of Lie derivatives of 1-forms, it is computationally quite efficient.
Furthermore, the formulation with codistributions also facilitates a comparison with the existing discrete-time literature regarding the closely related topic of dynamic feedback linearization, which is mostly formulated in terms of 1-forms rather than vector fields.
The presented results are illustrated by two examples.
\end{abstract}

\end{frontmatter}

\section{Introduction}

The concept of flatness has been introduced by Fliess, L{\'e}vine,
Martin and Rouchon in the 1990s for nonlinear continuous-time systems,
see e.g. \cite{FliessLevineMartinRouchon:1992}, \cite{FliessLevineMartinRouchon:1995},
or \cite{FliessLevineMartinRouchon:1999}. Flat continuous-time systems
possess the characteristic feature that all system variables can be
expressed by a flat output and its time derivatives, which allows
elegant solutions for trajectory planning and trajectory tracking
problems. Because of its high practical relevance with a wide variety
of applications, flatness has attracted a lot of attention in the
control systems community. Nevertheless, checking the flatness of
nonlinear multi-input systems is a highly nontrivial problem, for
which still no complete systematic solution in the form of verifiable
necessary and sufficient conditions exists (see e.g. \cite{NicolauRespondek:2016},
\cite{NicolauRespondek:2017}, or \cite{GstottnerKolarSchoberl:2021b}).

In view of the inherent discrete-time nature of digital control circuits,
applying flatness-based methods to discrete-time systems is not only
interesting from a theoretical but also from a practical perspective.
Furthermore, a discrete-time approach is also favorable for a combination
with data-based methods, see e.g. \cite{AlsaltiBerberichLopezAllgoewerMueller:2021}
or \cite{EckerSchoeberl:2023}. However, it should be noted that regarding
the flatness of nonlinear discrete-time systems there exist two approaches.
The first one is to replace the time derivatives of the continuous-time
definition by forward-shifts as e.g. in \cite{KaldmaeKotta:2013},
\cite{Sira-RamirezAgrawal:2004}, or \cite{KolarKaldmaeSchoberlKottaSchlacher:2016}.
This point of view is also consistent with the notion of endogenous
and exogenous dynamic feedback as it is defined in \cite{Aranda-BricaireMoog:2008}.
The second approach considers flatness as the existence of a one-to-one
correspondence between the system trajectories and the trajectories
of a trivial system. It is a generalization of the first approach,
since here the flat output may also depend on backward-shifts of the
system variables, see \cite{DiwoldKolarSchoberl:2020} or \cite{GuillotMillerioux:2020}.
To ensure a clear distinction, we refer to the first approach, which
we consider in the present paper, as forward-flatness.

In \cite{KolarSchoberlDiwold:2019} it has been shown that forward-flat
systems can be decomposed by state- and input transformations into
a subsystem and an endogenous dynamic feedback, such that the complete
system is forward-flat if and only if the subsystem is forward-flat.
As proposed in \cite{KolarSchoberlDiwold:2019}, a repeated application
of this property allows to check if a system is forward-flat by decomposing
it step by step into subsystems of decreasing size. If finally only
a trivial (i.e., empty) system is left, the original system is forward-flat,
and a flat output can be obtained. Otherwise, if in some step the
considered subsystem does not allow a further decomposition, it can
be concluded that this subsystem as well as the original system are
not forward-flat. However, for deriving the transformations which
achieve these decompositions, it is necessary to straighten out certain
distributions by the Frobenius theorem, which is computationally tedious
and requires the solution of nonlinear ordinary or linear partial
differential equations. Indeed, it would be computationally more efficient
to only check if the repeated decompositions are possible, without
actually performing them. Based on this idea, it has been shown in
\cite{KolarDiwoldSchoberl:2019} that forward-flatness can be checked
by computing a unique sequence of involutive distributions, which
generalizes the sequence of distributions from the static feedback
linearization test of \cite{Grizzle:1986}. The existence of such
a systematic test is surprising since there does not exist a counterpart
in the continuous-time case. The purpose of the present paper is now
to derive a dual test for forward-flatness, which is based on a unique
sequence of integrable codistributions. Our motivation is twofold:
First, deriving a dual approach is of interest in its own right, like
the dual version of the well-known static feedback linearization test
for continuous-time systems (e.g. \cite{GardnerShadwick:1992}, \cite{Sluis:1993b},
\cite{TilburySastry:1994}, \cite{Sastry:1999}). Second, the paper
is motivated by the fact that the majority of the discrete-time literature
regarding the closely related topics of dynamic feedback linearization
and controllability uses an algebraic framework based on 1-forms,
see e.g. \cite{Grizzle:1993}, \cite{Aranda-BricaireKottaMoog:1996},
or \cite{Aranda-BricaireMoog:2008}. Even though we use differential-geometric
methods, the codistributions of the proposed sequence are also spanned
by 1-forms. Thus, the dual approach should be accessible to a wider
audience and also facilitate a comparison with the existing literature.
A rather recent paper which uses an algebraic framework based on 1-forms
is \cite{Kaldmae:2021}, where necessary and sufficient conditions
for flatness of discrete-time systems in the more general sense including
backward-shifts are proposed. However, as mentioned there, even when
applied to the special case of forward-flat systems the computational
complexity is higher since partial differential equations have to
be solved, whereas in the present contribution only the intersection
of codistributions and computing Lie derivatives of 1-forms will be
needed. Another important aspect of our approach is that for a given
discrete-time system the proposed sequence of codistributions is uniquely
defined, i.e., like in \cite{KolarDiwoldSchoberl:2019} there occur
no degrees of freedom during the computations.

The paper is organized as follows: After an overview of some basic
differential-geometric concepts and the notation in Section \ref{sec:Notation},
in Section \ref{sec:Discrete-time-Systems} we recapitulate the concept
of forward-flatness for discrete-time systems. Subsequently, Section
\ref{sec:Invariant-Codistributions} addresses invariant codistributions
and Cauchy characteristics, which form the mathematical foundation
upon which the paper is based. In Section \ref{sec:Dual-Test-Forward-Flatness}
we then introduce the sequence of codistributions, and show how it
is related to system decompositions and forward-flatness. Finally,
Section \ref{sec:Example} illustrates our results by two examples.

\section{\protect\label{sec:Notation}Notation and Geometric Preliminaries}

Throughout the paper we make use of basic differential-geometric concepts.
The following section shall provide a brief overview and introduce
the employed notation. For an introduction to differential-geometric
concepts with a focus on nonlinear control applications we refer e.g.
to \cite{NijmeijervanderSchaft:1990} or \cite{Sastry:1999}, and
for a rather general introduction to differential geometry e.g. to
\cite{Boothby:1986} or \cite{Lee:2012}.

Let $\mathcal{M}$ denote an $n$-dimensional manifold with local
coordinates $x^{1},\ldots,x^{n}$. A vector field $v$ on $\mathcal{M}$
has the form $v=v^{i}(x)\partial_{x^{i}}$, where $\partial_{x^{i}}$,
$i=1,\ldots,n$ denotes the basis vector fields corresponding to the
local coordinates, and $v^{i}(x)$, $i=1,\ldots,n$ are smooth functions.
To keep formulas short and readable we make use of the Einstein summation
convention, i.e., the summation symbol is omitted when the index range
is clear from the context. The dual object of a vector field is a
1-form $\omega=\omega_{i}(x)\mathrm{d}x^{i}$, where $\mathrm{d}x^{i}$,
$i=1,\ldots,n$ are the differentials corresponding to the local coordinates,
and $\omega_{i}(x)$, $i=1,\ldots,n$ are again smooth functions.
Given a vector field $v$ and a 1-form $\omega$, their contraction
(interior product) is defined by $v\rfloor\omega=v^{i}(x)\omega_{i}(x)$.
The Lie derivative of a smooth function $f(x)$ along a vector field
$v$ is given by $L_{v}f=v^{i}(x)\partial_{x^{i}}f(x)$, and the Lie
derivative of a 1-form $\omega$ is given by $L_{v}\omega=(L_{v}\omega_{i})\mathrm{d}x^{i}+\omega_{i}(\mathrm{d}L_{v}x^{i})=(v^{k}\partial_{x^{k}}\omega_{i})\mathrm{d}x^{i}+\omega_{i}\mathrm{d}v^{i}$,
with $\mathrm{d}$ denoting the exterior derivative. Like for a smooth
function, the Lie derivative of a 1-form $\omega$ describes the change
of the 1-form when following the flow generated by the vector field
$v$. The Lie derivative of a vector field $w$ along a vector field
$v$ is given by the Lie bracket $[v,w]$.

The Lie derivative can also be applied to higher-order differential
forms. For a $k$-form $\alpha$, the Lie derivative along a vector
field $v$ is denoted by $L_{v}\alpha$, and the exterior derivative
of a $k$-form $\alpha$ yields a $(k+1)$-form $\mathrm{d}\alpha$.
In particular, the exterior derivative of a 0-form (function) $f$
gives the 1-form $\mathrm{d}f$. The contraction between a vector
field and a $k$-form $\alpha$ is denoted like in the case of a 1-form
by $v\rfloor\alpha$, and yields a $(k-1)$-form. For a 1-form $\omega$
and a vector field $v$, an important connection between Lie derivative,
contraction, and exterior derivative is given by Cartan's magic formula
$L_{v}\omega=v\rfloor\mathrm{d}\omega+\mathrm{d}(v\rfloor\omega)$.
A further important concept is the wedge product (exterior product)
of differential forms. If $\alpha$ is a $k$-form and $\beta$ a
$p$-form, then $\alpha\wedge\beta$ is a $(k+p)$-form. In particular,
if $\omega^{1},\ldots,\omega^{k}$ are 1-forms, then $\omega^{1}\wedge\ldots\wedge\omega^{k}$
is a $k$-form. This $k$-form is nonzero if and only if the 1-forms
$\omega^{1},\ldots,\omega^{k}$ are linearly independent.

If a set of 1-forms $\omega^{1},\ldots,\omega^{p}$ with some $p$
is linearly independent, then $P=\mathrm{span}\{\omega^{1},\ldots,\omega^{p}\}$
is a $p$-dimensional codistribution on the $n$-dimensional manifold
$\mathcal{M}$, where span denotes the span over the ring of smooth
functions $C^{\infty}(\mathcal{M})$. Likewise, if $v_{1},\ldots,v_{d}$
with some $d$ is a set of linearly independent vector fields, then
$D=\mathrm{span}\{v_{1},\ldots,v_{d}\}$ is a $d$-dimensional distribution.
The annihilator $P^{\perp}$ of a $p$-dimensional codistribution
$P$ is the unique $(n-p)$-dimensional distribution consisting of
all vector fields $v$ that meet $v\rfloor\omega=0$ for all 1-forms
$\omega\in P$. Conversely, the annihilator $D^{\perp}$ of a $d$-dimensional
distribution $D$ is the unique $(n-d$)-dimensional codistribution
consisting of all 1-forms $\omega$ that meet $v\rfloor\omega=0$
for all vector fields $v\in D$.

A $p$-dimensional codistribution $P=\mathrm{span}\{\omega^{1},\ldots,\omega^{p}\}$
is called integrable if $\mathrm{d}\omega^{i}\wedge\omega^{1}\wedge\ldots\wedge\omega^{p}=0$
for all $i=1,\ldots,p$. In this case, according to the Frobenius
theorem (version for codistributions, see e.g. \cite{Sastry:1999}),
there exist functions $f^{1}(x),\ldots,f^{p}(x)$ such that locally
$P=\mathrm{span}\{\mathrm{d}f^{1},\ldots,\mathrm{d}f^{p}\}$. Moreover,
since the differentials $\mathrm{d}f^{1},\ldots,\mathrm{d}f^{p}$
are linearly independent, it is possible to introduce local coordinates
$\bar{x}=\Phi(x)$ such that $f^{i}=\bar{x}^{i}$, $i=1,\ldots,p$.
In such coordinates $P=\mathrm{span}\{\mathrm{d}\bar{x}^{1},\ldots,\mathrm{d}\bar{x}^{p}\}$,
and we say that $P$ is straightened out. Similarly, a $d$-dimensional
distribution $D=\mathrm{span}\{v_{1},\ldots,v_{d}\}$ is called involutive
if $[v_{i},v_{j}]\in D$ for all $i,j\in\{1,\ldots,d\}$. In this
case, the Frobenius theorem (version for distributions, see e.g. \cite{NijmeijervanderSchaft:1990})
guarantees the existence of a coordinate transformation $\bar{x}=\Phi(x)$
such that locally $D=\mathrm{span}\{\partial_{\bar{x}^{1}},\ldots,\partial_{\bar{x}^{d}}\}$.
Again, we say that in these coordinates $D$ is straightened out.
This can be achieved in two steps. First, an involutive distribution
can always be written in the form $D=\mathrm{span}\{\bar{v}_{1},\ldots,\bar{v}_{d}\}$
with a basis consisting of vector fields that meet $[\bar{v}_{i},\bar{v}_{j}]=0$
for all $i,j\in\{1,\ldots,d\}$. Second, for any single vector field
$v=v^{i}(x)\partial_{x^{i}}$, the flow-box theorem allows to construct
a coordinate transformation $\bar{x}=\Phi(x)$ such that locally $v=\partial_{\bar{x}^{1}}$,
i.e., $v$ is straightened out (see e.g. \cite{NijmeijervanderSchaft:1990}).
Since all pairwise Lie brackets of the vector fields $\bar{v}_{1},\ldots,\bar{v}_{d}$
vanish, it is possible to straighten out these vector fields simultaneously,
i.e., $\bar{v}_{i}=\partial_{\bar{x}^{i}}$ for $i=1,\ldots,d$, and
the representation $D=\mathrm{span}\{\partial_{\bar{x}^{1}},\ldots,\partial_{\bar{x}^{d}}\}$
follows.

Regarding notation, it should also be noted that the symbols $\subset$
and $\supset$ are used in the sense that they include equality. Finally,
it is important to emphasize that throughout the paper we assume that
the dimensions of the considered codistributions and distributions
as well as the ranks of the considered Jacobian matrices are locally
constant.

\section{\protect\label{sec:Discrete-time-Systems}Discrete-time Systems and
Forward-flatness}

In this contribution, we consider nonlinear time-invariant discrete-time
systems
\begin{equation}
x^{i,+}=f^{i}(x,u)\,,\quad i=1,\ldots,n\label{eq:sys}
\end{equation}
with $\dim(x)=n$, $\dim(u)=m$, and smooth functions $f(x,u)$ that
satisfy the submersivity condition
\begin{equation}
\mathrm{rank}(\partial_{(x,u)}f)=n\,.\label{eq:submersivity}
\end{equation}
The assumption (\ref{eq:submersivity}) is quite common in the discrete-time
literature and necessary for accessibility (see e.g. \cite{Grizzle:1993}).
The superscript $+$ on the left-hand side of the system equations
(\ref{eq:sys}) denotes the forward-shift of the corresponding variables.
For indicating also higher-order forward-shifts, we use subscripts
in brackets. For instance, the $\alpha$-th forward-shift of a component
$u^{j}$ of the input with some $\alpha\in\mathbb{N}$ is denoted
by $u_{[\alpha]}^{j}$, and $u_{[\alpha]}=(u_{[\alpha]}^{1},\ldots,u_{[\alpha]}^{m})$.
In order to summarize the concept of forward-flatness, we make use
of a space with coordinates $(x,u,u_{[1]},u_{[2]},\ldots)$. If $g$
is a smooth function defined on this space, then its future values
can be determined by a repeated application of the forward-shift operator,
which is defined according to the rule
\[
\delta(g(x,u,u_{[1]},u_{[2]},\ldots))=g(f(x,u),u_{[1]},u_{[2]},u_{[3]},\ldots)\,.
\]
To define a backward-shift operator $\delta^{-1}$, in general it
would be necessary to extend the system map (\ref{eq:sys}) such that
it becomes invertible. However, in this contribution we only need
backward-shifts of functions of the form $h(f(x,u))$, and in this
case the backward-shift is obviously given by
\[
\delta^{-1}(h(f(x,u))=h(x)\,.
\]
The shift operators can also be applied to 1-forms by shifting both
their coefficients and differentials. In particular, we will need
the backward-shift of 1-forms of the form $\omega_{i}(f(x,u))\mathrm{d}f^{i}$,
which is given by
\begin{equation}
\delta^{-1}(\omega_{i}(f(x,u))\mathrm{d}f^{i})=\omega_{i}(x)\mathrm{d}x^{i}\,.\label{eq:backward_shift_1form}
\end{equation}
For codistributions which are spanned by 1-forms of the form $\omega_{i}(f(x,u))\mathrm{d}f^{i}$,
the backward-shift is defined accordingly. It is also important to
emphasize that all our results are local since we apply the inverse-
and the implicit function theorem as well as the Frobenius theorem,
which allow only local results. Thus, like the discrete-time static
feedback linearization problem, we consider discrete-time flatness
in a suitable neighborhood of an equilibrium $(x_{0},u_{0})$ of the
system (\ref{eq:sys}). Since the map (\ref{eq:sys}) is continuous,
it can then be guaranteed that the system trajectories do not leave
the regions of validity of the coordinate transformations derived
by the above-mentioned theorems (see e.g. \cite{Grizzle:1986}, \cite{NijmeijervanderSchaft:1990},
or \cite{Kotta:1995}). Forward-flatness can now be defined as follows.
\begin{defn}
\label{def:forward-flatness}The system (\ref{eq:sys}) is said to
be forward-flat around an equilibrium $(x_{0},u_{0})$, if the $n+m$
coordinate functions $x$ and $u$ can be expressed locally by an
$m$-tuple of functions
\begin{equation}
y^{j}=\varphi^{j}(x,u,u_{[1]},\ldots,u_{[q]})\,,\quad j=1,\ldots,m\label{eq:flat_output}
\end{equation}
and their forward-shifts $y_{[1]}=\delta(\varphi(x,u,u_{[1]},\ldots,u_{[q]}))$,
$y_{[2]}=\delta^{2}(\varphi(x,u,u_{[1]},\ldots,u_{[q]}))$, $\ldots$
up to some finite order. The $m$-tuple (\ref{eq:flat_output}) is
called a flat output.
\end{defn}

It should be noted that the number of components
of a flat output is equal to the number of components of the input
of the system, i.e., $\dim(y)=\dim(u)=m$. The representation of
$x$ and $u$ by the flat output and its forward-shifts is unique,
and has the form\footnote{The multi-index $R=(r_{1},\ldots,r_{m})$ contains the number of forward-shifts
of the individual components of the flat output which is needed to
express $x$ and $u$, and $y_{[0,R]}$ is an abbreviation for $y$
and its forward-shifts up to order $R$.}
\begin{equation}
\begin{array}{cclcl}
x^{i} & = & F_{x}^{i}(y_{[0,R-1]})\,, & \quad & i=1,\ldots,n\\
u^{j} & = & F_{u}^{j}(y_{[0,R]})\,, & \quad & j=1,\ldots,m\,.
\end{array}\label{eq:flat_param}
\end{equation}
The term forward-flatness refers to the fact that both in the flat
output (\ref{eq:flat_output}) as well as in the corresponding parameterization
of the system variables (\ref{eq:flat_param}) there occur forward-shifts
but no backward-shifts like in the more general case discussed in
\cite{DiwoldKolarSchoberl:2020} or \cite{GuillotMillerioux:2020}.
Like for differentially flat continuous-time systems, it can be shown
that the map $(x,u)=F(y_{[R]})$ given by (\ref{eq:flat_param}) is
a submersion, i.e., that its Jacobian matrix has linearly independent
rows. If the system (\ref{eq:sys}) is static feedback linearizable
and $y=\varphi(x)$ a linearizing output, the submersion (\ref{eq:flat_param})
becomes a diffeomorphism. The proof is analogous to the continuous-time
case, which can be found e.g. in \cite{Kolar:2017}.
\begin{example}
\label{exa:runningexample_flatness}The system
\begin{equation}
\begin{array}{ccl}
x^{1,+} & = & u^{1}-x^{2}\\
x^{2,+} & = & x^{1}(u^{1}-u^{2})\\
x^{3,+} & = & u^{2}
\end{array}\label{eq:runningexample_sys}
\end{equation}
with $\dim(x)=3$ and $\dim(u)=2$ is forward-flat around the equilibrium
$x_{0}=(\tfrac{1}{2},\tfrac{1}{2},0)$, $u_{0}=(1,0)$. A flat output
is given by
\begin{equation}
y=(x^{1}-x^{3},x^{2})\,,\label{eq:runningexample_flat_output}
\end{equation}
and the corresponding map (\ref{eq:flat_param}) reads
\begin{equation}
\begin{array}{ccl}
x^{1} & = & \tfrac{y_{[1]}^{2}}{y_{[1]}^{1}+y^{2}}\\
x^{2} & = & y^{2}\\
x^{3} & = & -\tfrac{y^{1}y_{[1]}^{1}+y^{1}y^{2}-y_{[1]}^{2}}{y_{[1]}^{1}+y^{2}}\\
u^{1} & = & \tfrac{y_{[2]}^{1}y^{2}+y^{2}y_{[1]}^{2}+y_{[2]}^{2}}{y_{[2]}^{1}+y_{[1]}^{2}}\\
u^{2} & = & -\tfrac{y_{[1]}^{1}y_{[2]}^{1}+y_{[1]}^{1}y_{[1]}^{2}-y_{[2]}^{2}}{y_{[2]}^{1}+y_{[1]}^{2}}\,.
\end{array}\label{eq:runningexample_flat_param}
\end{equation}
This can be verified by substituting the flat output (\ref{eq:runningexample_flat_output})
and its forward-shifts into (\ref{eq:runningexample_flat_param}).
\end{example}

As proven in \cite{KolarSchoberlDiwold:2019}, forward-flat systems
can always be transformed into a certain triangular form.
\begin{thm}
\label{thm:basic_decomposition_flat}A forward-flat system (\ref{eq:sys})
can be transformed by a state- and input transformation\begin{subequations}\label{eq:decomposition_coord_transformation}
\begin{align}
(\bar{x}_{1},\bar{x}_{2})=\:\: & \Phi_{x}(x)\label{eq:decompostion_state_transformation}\\
(\bar{u}_{1},\bar{u}_{2})=\:\: & \Phi_{u}(x,u)\label{eq:decompostion_input_transformation}
\end{align}
\end{subequations}into the form\begin{subequations}
\label{eq:basic_decomposition_flat}
\begin{align}
\bar{x}_{2}^{+} & =f_{2}(\bar{x}_{2},\bar{x}_{1},\bar{u}_{2})\label{eq:decomposition_subsystem}\\
\bar{x}_{1}^{+} & =f_{1}(\bar{x}_{2},\bar{x}_{1},\bar{u}_{2},\bar{u}_{1})\label{eq:decomposition_feedback}
\end{align}
\end{subequations} with $\dim(\bar{x}_{1})\geq1$ and $\mathrm{rank}(\partial_{\bar{u}_{1}}f_{1})=\dim(\bar{x}_{1})$.
\end{thm}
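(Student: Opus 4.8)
The plan is to read the decomposition off the explicit flat parameterization (\ref{eq:flat_param}) together with the submersivity condition (\ref{eq:submersivity}). The guiding observation is an asymmetry in the highest shifts: the top shifts $y_{[R]}$ occur in the representation $u=F_u(y_{[0,R]})$ of the input but not in the representation $x=F_x(y_{[0,R-1]})$ of the state, so after one forward-shift the top shifts reappear in $x^+=\delta(F_x)$. Varying a single top shift $y^j_{[r_j]}$ while freezing all remaining jet coordinates therefore leaves $x$ unchanged but moves $u$ and $x^+$ simultaneously; this is a direction along which the input genuinely steers the successor state. Since the state must depend on the flat output, the multi-index $R$ is nonzero, hence at least one such direction exists and $\partial_u f\ne 0$. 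This seeds the block $\bar{x}_1$ and already indicates $\dim(\bar{x}_1)\ge 1$.

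To turn this into coordinates I would work with the $1$-forms $\mathrm{d}f^1,\ldots,\mathrm{d}f^n$, which are independent by (\ref{eq:submersivity}), and isolate the functions of the successor state that are insensitive to a suitable block of inputs. Concretely, I consider the distribution $\mathcal{D}$ spanned by the input-derivatives $\partial_u f$ (regarded in $T\mathcal{M}$ at the successor point) and look for a maximal set of functions $\phi(x^+)$ whose differentials annihilate $\mathcal{D}$. If $\mathcal{D}$ can be straightened out, the Frobenius theorem produces such functions; collecting them gives the components $\bar{x}_2$ of the state transformation, with $\bar{x}_2^+$ insensitive to the corresponding inputs, and completing to a full coordinate system supplies the complementary block $\bar{x}_1$, hence $\Phi_x$. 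Splitting the input so that $\bar{u}_1$ are the directions rendering $\partial_{\bar{u}_1}\bar{x}_1^+$ of full rank and $\bar{u}_2$ a complement yields $\Phi_u$. By construction $\bar{x}_2^+$ does not depend on $\bar{u}_1$, which is (\ref{eq:decomposition_subsystem}), while $\bar{x}_1^+$ carries the full input rank, giving (\ref{eq:decomposition_feedback}) together with $\mathrm{rank}(\partial_{\bar{u}_1}f_1)=\dim(\bar{x}_1)$.

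I expect the main obstacle to be exactly the integrability invoked in the previous step. Because the vectors $\partial_u f$ depend on $u$, the object whose annihilator defines $\bar{x}_2$ is not a naive image but the span of $\partial_u f$ accumulated over the entire fibre of $f$ above each successor point; only when this distribution is involutive does Frobenius deliver functions $\phi(x^+)$ in the correct number, and only then do the dimensions of $\bar{x}_1$ and $\bar{x}_2$ fit the asserted triangular form. Establishing this involutivity is where forward-flatness must enter in an essential way, since submersivity alone is insufficient, and it is the technical heart of the argument; the nontriviality $\dim(\bar{x}_1)\ge 1$ then follows because this distribution is nonzero, as already witnessed by the top-shift direction above. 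This fibre-wise invariance is also precisely the kind of structure that a notion of invariant distributions or codistributions is designed to capture.

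Finally I would confirm that $\Phi_x$ and $\Phi_u$ are genuine local diffeomorphisms by the inverse and implicit function theorems, which is legitimate under the standing constant-rank assumption, and verify that the transformed equations indeed exhibit the stated dependence pattern with no spurious residual dependence of $\bar{x}_2^+$ on $\bar{u}_1$.
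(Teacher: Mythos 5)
Your construction, as stated, provably fails on the paper's own running example. You define $\bar{x}_{2}$ through a maximal set of functions $\phi(x^{+})$ whose differentials annihilate the span $\mathcal{D}$ of the vectors $\partial_{u}f$ accumulated over the whole fibre of $f$; equivalently, functions $\phi$ such that $\phi\circ f$ is independent of the \emph{entire} input $u$. That is strictly stronger than what the triangular form (\ref{eq:basic_decomposition_flat}) requires, since there $f_{2}$ may depend on the input block $\bar{u}_{2}$. For the system (\ref{eq:runningexample_sys}), the fibre over a point $(a,b,c)$ with $b\neq0$ is given by $u^{2}=c$, $u^{1}=a+x^{2}$, $x^{1}=b/(a+x^{2}-c)$ with $x^{2},x^{3}$ free; the pushforwards $f_{*}\partial_{u^{1}}$ and $f_{*}\partial_{u^{2}}$ have components $(1,x^{1},0)$ and $(0,-x^{1},1)$ in the successor coordinates, and since $x^{1}$ varies along the fibre, their collection spans the whole three-dimensional tangent space. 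Hence $\mathcal{D}$ is everything (note $b=\tfrac{1}{2}\neq 0$ at the image of the given equilibrium), your $\bar{x}_{2}$ is empty, $\bar{x}_{1}=x$, and the rank condition would demand $\mathrm{rank}(\partial_{u}f_{1})=3>m=2$, which is impossible --- yet this system is forward-flat and admits the decomposition of Example \ref{exa:runningexample_triangular_decomposition}, in which $f_{2}$ depends on $\bar{u}_{2}^{1}=u^{1}-u^{2}$. The correct object is not the annihilator of all accumulated input directions: it is the codistribution $\mathrm{span}\{\mathrm{d}x\}\cap\mathrm{span}\{\mathrm{d}f\}$, whose elements are 1-forms with $(x,u)$-dependent coefficients (not exact differentials of functions of $x^{+}$), enlarged to its smallest $\mathrm{span}\{\mathrm{d}f\}^{\perp}$-invariant extension and shifted back --- the codistribution $P_{2}$ of Algorithm \ref{alg:definition_sequence} and Theorem \ref{thm:relation_sequence_decomposition} --- and the admissible split $\bar{u}_{1}/\bar{u}_{2}$ is read off from it, not chosen beforehand. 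If you instead meant $\mathcal{D}$ to be built from only a ``suitable block'' of inputs, the proposal offers no way to identify that block, and identifying it is essentially the content of the theorem.

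Beyond this, the decisive step is deferred rather than proved: you say yourself that establishing the involutivity (together with the dimension count and $\mathrm{rank}(\partial_{\bar{u}_{1}}f_{1})=\dim(\bar{x}_{1})$) is ``where forward-flatness must enter in an essential way'' and is ``the technical heart of the argument,'' but no such argument is given; the top-shift observation only yields $\partial_{u}f\neq0$, which is far weaker than the rank condition. For comparison, the paper does not prove Theorem \ref{thm:basic_decomposition_flat} in-text at all: it combines Theorem 6 of \cite{KolarSchoberlDiwold:2019} (decomposition for systems with $\mathrm{rank}(\partial_{u}f)=m$) with Lemma 9 of \cite{KolarSchoberlDiwold:2019} (elimination of redundant inputs). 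A self-contained blind proof would therefore have to reproduce the work of that reference; as written, your proposal is a plan whose central step is missing and whose concrete construction breaks on Example \ref{exa:runningexample_triangular_decomposition}.
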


\begin{proof}
For systems (\ref{eq:sys}) with $\mathrm{rank}(\partial_{u}f)=m$,
i.e., systems without redundant inputs, this is shown in Theorem 6
of \cite{KolarSchoberlDiwold:2019}. In the case $\mathrm{rank}(\partial_{u}f)<m$,
redundant inputs can always be eliminated by an input transformation
without affecting the forward-flatness of the system, see e.g. Lemma
9 of \cite{KolarSchoberlDiwold:2019}. Thus, combining Theorem 6 and
Lemma 9 of \cite{KolarSchoberlDiwold:2019} completes the proof for
general systems (\ref{eq:sys}) with $\mathrm{rank}(\partial_{u}f)\leq m$.
\end{proof}
\begin{example}
\label{exa:runningexample_triangular_decomposition}Consider the system
(\ref{eq:runningexample_sys}) of Example \ref{exa:runningexample_flatness}.
Applying the state- and input transformation\footnote{How to derive the transformation (\ref{eq:decomposition_coord_transformation})
systematically will be shown in the proof of Theorem \ref{thm:relation_sequence_decomposition}.
For an alternative approach based on involutive distributions see
\cite{KolarSchoberlDiwold:2019}.}
\[
\begin{array}{cclccl}
\bar{x}_{2}^{1} & = & x^{1}-x^{3}\quad\quad & \bar{u}_{2}^{1} & = & u^{1}-u^{2}\\
\bar{x}_{2}^{2} & = & x^{2} & \bar{u}_{1}^{1} & = & u^{2}\\
\bar{x}_{1}^{1} & = & x^{3}
\end{array}
\]
transforms the system into the form
\[
\begin{array}{ccl}
\bar{x}_{2}^{1,+} & = & \bar{u}_{2}^{1}-\bar{x}_{2}^{2}\\
\bar{x}_{2}^{2,+} & = & (\bar{x}_{2}^{1}+\bar{x}_{1}^{1})\bar{u}_{2}^{1}\\
\bar{x}_{1}^{1,+} & = & \bar{u}_{1}^{1}\,,
\end{array}
\]
which corresponds to (\ref{eq:basic_decomposition_flat}) with $\bar{x}_{1}=\bar{x}_{1}^{1}$,
$\bar{x}_{2}=(\bar{x}_{2}^{1},\bar{x}_{2}^{2})$, $\bar{u}_{1}=\bar{u}_{1}^{1}$,
and $\bar{u}_{2}=\bar{u}_{2}^{1}$. Hence, $\dim(\bar{x}_{1})=1$,
$\dim(\bar{x}_{2})=2$, $\dim(\bar{u}_{1})=1$, and $\dim(\bar{u}_{2})=1$.\footnote{Because of $\dim(\bar{x}_{1})=1$, $\dim(\bar{u}_{1})=1$,
and $\dim(\bar{u}_{2})=1$, the upper indices in $\bar{x}_{1}^{1}$,
$\bar{u}_{1}^{1}$, and $\bar{u}_{2}^{1}$ could also be omitted.
However, for consistency reasons, we write the upper index even if
the corresponding block of variables is only 1-dimensional.}
\end{example}

The importance of the triangular form (\ref{eq:basic_decomposition_flat})
is due to the fact that the equations (\ref{eq:decomposition_subsystem})
can be considered as a subsystem with inputs $(\bar{x}_{1},\bar{u}_{2})$,
and that this subsystem is forward-flat if and only if the complete
system (\ref{eq:basic_decomposition_flat}) is forward-flat.
\begin{lem}
\label{lem:flatness_properties_basic_decomposition}A system of the
form (\ref{eq:basic_decomposition_flat}) with $\mathrm{rank}(\partial_{\bar{u}_{1}}f_{1})=\dim(\bar{x}_{1})$
is forward-flat if and only if the subsystem (\ref{eq:decomposition_subsystem})
with the inputs $(\bar{x}_{1},\bar{u}_{2})$ is forward-flat. The
flat outputs are related as follows:
\begin{enumerate}
\item If $\dim(\bar{u}_{1})=\dim(\bar{x}_{1})$, then every flat output
$y_{2}$ of (\ref{eq:decomposition_subsystem}) is also a flat output
$y$ of (\ref{eq:basic_decomposition_flat}).
\item \label{enu:case_redundant_inputs_basic_decomposition}If $\dim(\bar{u}_{1})>\dim(\bar{x}_{1})$,
then the components of $\bar{u}_{1}$ can be split in the form $\bar{u}_{1}=(\hat{u}_{1},\tilde{u}_{1})$
such that $\mathrm{rank}(\partial_{\hat{u}_{1}}f_{1})=\dim(\bar{x}_{1})$,
and a flat output of (\ref{eq:basic_decomposition_flat}) is given
by $y=(y_{2},\tilde{u}_{1})$.
\end{enumerate}
\end{lem}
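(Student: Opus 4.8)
The plan is to prove both directions of the equivalence by exploiting the triangular structure of \eqref{eq:basic_decomposition_flat} together with the rank condition $\mathrm{rank}(\partial_{\bar{u}_{1}}f_{1})=\dim(\bar{x}_{1})$, which guarantees that the $\bar{x}_{1}$-dynamics \eqref{eq:decomposition_feedback} act as an endogenous dynamic feedback. The key observation is that $\bar{x}_{1}$ appears as an \emph{input} of the subsystem \eqref{eq:decomposition_subsystem}, so a flat output and parameterization of the subsystem must express $\bar{x}_{2}$ and the subsystem inputs $(\bar{x}_{1},\bar{u}_{2})$ through $y_{2}$ and its forward-shifts; the feedback equation \eqref{eq:decomposition_feedback} then lets us recover $\bar{u}_{1}$, and conversely.

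For the direction ``subsystem forward-flat $\Rightarrow$ complete system forward-flat'', I would first take a flat output $y_{2}$ of \eqref{eq:decomposition_subsystem} and write out the resulting parameterization of the subsystem variables $\bar{x}_{2}$, $\bar{x}_{1}$, and $\bar{u}_{2}$ by $y_{2}$ and its forward-shifts, in analogy with \eqref{eq:flat_param}. In case (i), where $\dim(\bar{u}_{1})=\dim(\bar{x}_{1})$, the rank condition makes $\partial_{\bar{u}_{1}}f_{1}$ square and invertible, so equation \eqref{eq:decomposition_feedback} can be solved for $\bar{u}_{1}$ as a function of $\bar{x}_{2},\bar{x}_{1},\bar{u}_{2}$ and the forward-shift $\bar{x}_{1}^{+}$. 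Since $\bar{x}_{1}$ is already parameterized through $y_{2}$, its forward-shift $\bar{x}_{1}^{+}$ is parameterized through the forward-shifts of $y_{2}$, and hence $\bar{u}_{1}$ is expressed by $y_{2}$ and its shifts as well. Thus $y=y_{2}$ parameterizes all variables of \eqref{eq:basic_decomposition_flat}, proving it is a flat output. In case (ii), where $\dim(\bar{u}_{1})>\dim(\bar{x}_{1})$, the rank condition lets me select a subset $\hat{u}_{1}$ of $\bar{u}_{1}$ with $\mathrm{rank}(\partial_{\hat{u}_{1}}f_{1})=\dim(\bar{x}_{1})$; the implicit function theorem then solves \eqref{eq:decomposition_feedback} for $\hat{u}_{1}$ in terms of $\bar{x}_{2},\bar{x}_{1},\bar{u}_{2},\tilde{u}_{1}$ and $\bar{x}_{1}^{+}$, so that $\hat{u}_{1}$ is expressed by $y_{2}$, $\tilde{u}_{1}$ and their shifts. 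The remaining components $\tilde{u}_{1}$ are free and must therefore be adjoined to the flat output, giving $y=(y_{2},\tilde{u}_{1})$.

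For the converse direction ``complete system forward-flat $\Rightarrow$ subsystem forward-flat'', I would start from a flat output $y$ of \eqref{eq:basic_decomposition_flat} and its parameterization \eqref{eq:flat_param} of $(\bar{x}_{1},\bar{x}_{2},\bar{u}_{1},\bar{u}_{2})$. The goal is to produce a flat output $y_{2}$ of the subsystem \eqref{eq:decomposition_subsystem} regarded as a system with inputs $(\bar{x}_{1},\bar{u}_{2})$. The natural candidate is to restrict or project $y$ appropriately; because the subsystem has $\dim(\bar{u}_{2})+\dim(\bar{x}_{1})$ inputs but only $\dim(\bar{x}_{1})+\dim(\bar{u}_{2})$ worth of flatness data relative to the ambient system, one checks the dimension count $\dim(y_{2})=\dim(\bar{u}_{2})+\dim(\bar{x}_{1})$ matches the subsystem's input dimension. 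One then verifies that the subsystem variables $\bar{x}_{2}$ and the subsystem inputs $(\bar{x}_{1},\bar{u}_{2})$ can be recovered from $y_{2}$ and its shifts, using that the full parameterization already expresses $\bar{x}_{2},\bar{x}_{1},\bar{u}_{2}$ through $y$, and that the feedback equation can be inverted to trade the dependence on $\bar{u}_{1}$ for dependence on shifts of the other variables.

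The main obstacle I anticipate is the bookkeeping in the converse direction: constructing $y_{2}$ explicitly from $y$ and confirming that it is a genuine flat output of the subsystem, i.e., that it does not secretly require $\bar{u}_{1}$ or backward-shifts to parameterize the subsystem's variables. This requires a careful argument that the forward-shifts appearing in \eqref{eq:flat_param} can be organized so that all occurrences of $\bar{u}_{1}$ (and its shifts) are eliminable via the invertibility of $\partial_{\hat{u}_{1}}f_{1}$, leaving a closed parameterization purely in the subsystem data. The forward direction is comparatively routine once the rank condition is used to solve \eqref{eq:decomposition_feedback} for $\bar{u}_{1}$ (or $\hat{u}_{1}$), so the effort concentrates on showing the recovered $y_{2}$ satisfies Definition \ref{def:forward-flatness} for the subsystem.
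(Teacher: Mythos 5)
Your forward direction is sound and is essentially the paper's argument: the rank condition $\mathrm{rank}(\partial_{\bar{u}_{1}}f_{1})=\dim(\bar{x}_{1})$ lets you solve (\ref{eq:decomposition_feedback}) for $\bar{u}_{1}$ (resp.\ $\hat{u}_{1}$) in terms of the subsystem variables and the forward-shift of $\bar{x}_{1}$, so a flat output $y_{2}$ of (\ref{eq:decomposition_subsystem}) yields the flat output $y=y_{2}$ (case (i)) or $y=(y_{2},\tilde{u}_{1})$ (case (ii)) of (\ref{eq:basic_decomposition_flat}). The genuine gap is the converse direction, which you never actually prove: ``restrict or project $y$ appropriately'' is not a construction, the dimension count you offer is circular (you compare $\dim(\bar{u}_{2})+\dim(\bar{x}_{1})$ with itself), and in case (ii) it is not even clear which $\dim(\tilde{u}_{1})$ components of $y$ should be discarded. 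You flag this yourself as the ``main obstacle,'' and as written it remains an unproved plan.

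The idea you are missing --- which makes the paper's proof a few lines --- is to use the rank condition not pointwise via the implicit function theorem but as an input transformation (\ref{eq:decompostion_input_transformation}): take $\hat{u}_{1}:=f_{1}(\bar{x}_{2},\bar{x}_{1},\bar{u}_{2},\bar{u}_{1})$ as new input coordinates, completed by some $\tilde{u}_{1}$ to an invertible change of inputs. In these coordinates (\ref{eq:decomposition_feedback}) reads simply $\bar{x}_{1}^{+}=\hat{u}_{1}$, i.e., the complete system is the subsystem (\ref{eq:decomposition_subsystem}) with its inputs $\bar{x}_{1}$ prolonged by one delay, plus inputs $\tilde{u}_{1}$ that enter nowhere. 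From this normal form both directions follow from Definition \ref{def:forward-flatness}: any function of the states, inputs and input shifts of the complete system becomes, after substituting $\hat{u}_{1}=\bar{x}_{1}^{+}$ and, recursively, the shifts of $\hat{u}_{1}$ by higher shifts of $\bar{x}_{1}$, a function of $\bar{x}_{2}$, the subsystem inputs $(\bar{x}_{1},\bar{u}_{2})$ and their forward-shifts (and of $\tilde{u}_{1}$ and its shifts). In case (i) this means a flat output $y$ of the complete system is, verbatim, a flat output of the subsystem --- its parameterization (\ref{eq:flat_param}) of $(\bar{x}_{2},\bar{x}_{1},\bar{u}_{2})$ is exactly the subsystem parameterization, so none of the ``careful bookkeeping'' you anticipate is needed. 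In case (ii) the same observation reduces the converse to discarding the inputs $\tilde{u}_{1}$ that appear in no equation, which the paper treats as immediate in this normal form (and which is dealt with by the redundant-input lemma of its cited reference). Without this normalization step, your converse direction does not go through as sketched.
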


\begin{proof}
Because of $\mathrm{rank}(\partial_{\bar{u}_{1}}f_{1})=\dim(\bar{x}_{1})$,
it is always possible to choose the input transformation (\ref{eq:decompostion_input_transformation})
such that with $\bar{u}_{1}=(\hat{u}_{1},\tilde{u}_{1})$ the equations
(\ref{eq:decomposition_feedback}) have the form $\bar{x}_{1}^{+}=\hat{u}_{1}$.
Since (\ref{eq:decomposition_feedback}) is then only a simple prolongation
of the input variables $\bar{x}_{1}$ of the subsystem (\ref{eq:decomposition_subsystem}),
the claims follow immediately from Definition \ref{def:forward-flatness}.
\end{proof}
In the case $\dim(\bar{u}_{1})=\dim(\bar{x}_{1})$, the equations
(\ref{eq:decomposition_feedback}) can be considered as an endogenous
dynamic feedback for the subsystem (\ref{eq:decomposition_subsystem}).
The term ``endogenous'' reflects the fact that every trajectory
$(\bar{x}_{2}(k),\bar{x}_{1}(k),\bar{u}_{2}(k))$ of the subsystem
(\ref{eq:decomposition_subsystem}) uniquely determines a trajectory
of the complete system (\ref{eq:basic_decomposition_flat}). Because
of $\dim(\bar{u}_{1})=\dim(\bar{x}_{1})$ and $\mathrm{rank}(\partial_{\bar{u}_{1}}f_{1})=\dim(\bar{x}_{1})$,
the corresponding sequence $\bar{u}_{1}(k)$ can be calculated immediately
from the equations (\ref{eq:decomposition_feedback}). In the case
$\dim(\bar{u}_{1})>\dim(\bar{x}_{1})$, the trajectory of the input
variables $\tilde{u}_{1}$ can still be chosen arbitrarily, reflecting
the fact that a flat output of (\ref{eq:basic_decomposition_flat})
is given by $y=(y_{2},\tilde{u}_{1})$.
\begin{rem}
\label{rem:static_feedback_decomposition}A special case of forward-flat
systems are systems which are linearizable by a static feedback. Such
systems allow (repeated) decompositions of the form (\ref{eq:basic_decomposition_flat})
with $\bar{u}_{2}$ empty, see \cite{NijmeijervanderSchaft:1990}.
\end{rem}

\section{\protect\label{sec:Invariant-Codistributions}Invariant Codistributions
and Cauchy Characteristics}

The test for forward-flatness which we propose in Section~\ref{sec:Dual-Test-Forward-Flatness}
is based on the concept of invariant codistributions. The notion of
invariant distributions and codistributions is used quite frequently
in control theory, see e.g. \cite{NijmeijervanderSchaft:1990}. A
typical field of application is the analysis of the controllability
and observability properties of nonlinear continuous-time systems.
In this section, we recapitulate basic facts and formulate some technical
results which we need in the main part of the paper. As already mentioned
in Section \ref{sec:Notation}, throughout the paper we assume that
all considered distributions and codistributions have locally constant
dimension.

A $p$-dimensional codistribution $P=\mathrm{span}\{\omega^{1},\ldots,\omega^{p}\}$,
defined on some $n$-dimensional manifold $\mathcal{M}$ with local
coordinates $(x^{1},\ldots,x^{n})$, is called invariant w.r.t. a
vector field $v$ if $L_{v}\omega\in P$ for all 1-forms $\omega\in P$.
This condition is often abbreviated as $L_{v}P\subset P$, cf. \cite{NijmeijervanderSchaft:1990}.
Accordingly, $P$ is called invariant w.r.t. a $d$-dimensional distribution
$D=\mathrm{span}\{v_{1},\ldots,v_{d}\}$ if $L_{v}\omega\in P$ for
all 1-forms $\omega\in P$ and vector fields $v\in D$. It is straightforward
to verify that this is the case if and only if
\[
L_{v_{i}}\omega^{j}\in P\,,\quad i=1,\ldots,d\,,\,j=1,\ldots,p
\]
for arbitrary bases $\{v_{1},\ldots,v_{d}\}$ of $D$ and $\{\omega^{1},\ldots,\omega^{p}\}$
of $P$. Furthermore, it is an immediate consequence of the definition
of invariance that also all higher order (repeated and mixed) Lie
derivatives of 1-forms $\omega\in P$ w.r.t. vector fields $v\in D$
are contained in $P$.
\begin{example}
\label{exa:invariance}Let $P=\mathrm{span}\{\omega^{1},\omega^{2}\}$
with $\omega^{1}=x^{3}\mathrm{d}x^{2}$ and $\omega^{2}=-x^{2}x^{4}\mathrm{d}x^{1}+(x^{4})^{2}\mathrm{d}x^{4}$
be a codistribution on a manifold with coordinates $(x^{1},x^{2},x^{3},x^{4})$
and $v=\partial_{x^{3}}$ a vector field. Since the Lie derivatives
$L_{v}\omega^{1}=\mathrm{d}x^{2}$ and $L_{v}\omega^{2}=0$ are both
contained in $P$, the codistribution is invariant w.r.t. $v$.
\end{example}

Given a vector field $v$ and a codistribution $P$ which is not invariant
w.r.t. $v$, one can pose the question how to extend $P$ such that
it becomes invariant. Indeed, there always exists a unique smallest
invariant codistribution which contains $P$. Computing the smallest
invariant codistribution which contains a given codistribution is
used in the control literature e.g. for checking the observability
of nonlinear continuous-time systems, see \cite{NijmeijervanderSchaft:1990}.
In the following, we consider the general case of invariance w.r.t.
a distribution, and show that the invariant extension can be constructed
by adding suitable (also higher-order) Lie derivatives of the 1-forms
of an arbitrary basis of $P$.
\begin{prop}
\label{prop:invariant_extension}Given a codistribution $P$ and a
distribution $D$, there exists a unique codistribution $\hat{P}$
of minimal dimension which contains $P$ and is invariant w.r.t. $D$.
\end{prop}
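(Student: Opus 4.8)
The plan is to construct $\hat P$ explicitly as the span of $P$ together with all iterated Lie derivatives of a basis of $P$ along vector fields in $D$, and then to verify separately that this construction yields a codistribution that (a) contains $P$, (b) is invariant w.r.t.\ $D$, (c) is of minimal dimension among all such codistributions, and (d) is unique. Concretely, fix a basis $\{\omega^1,\ldots,\omega^p\}$ of $P$ and a basis $\{v_1,\ldots,v_d\}$ of $D$, and define
\[
\hat P=\mathrm{span}\{L_{v_{i_k}}\cdots L_{v_{i_1}}\omega^j \;:\; k\geq 0,\; i_1,\ldots,i_k\in\{1,\ldots,d\},\; j\in\{1,\ldots,p\}\}\,,
\]
where $k=0$ recovers the original 1-forms $\omega^j$ themselves. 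The inclusion $P\subset\hat P$ is then immediate from the $k=0$ terms.

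The next step is to show that $\hat P$ is a genuine finite-dimensional codistribution and is invariant. Finiteness follows from the standing assumption of locally constant dimension together with the fact that the ambient manifold is $n$-dimensional: the spanning set above is a nested increasing family of codistributions $P=P_0\subset P_1\subset P_2\subset\cdots$, where $P_{\ell}$ is spanned by all Lie derivatives of order at most $\ell$, and since $\dim P_\ell\leq n$ for every $\ell$, the chain must stabilize at some finite stage, say $P_N=P_{N+1}=\hat P$. Stabilization is the crux: once $P_N=P_{N+1}$, every $L_{v_i}\omega$ with $\omega\in P_N$ already lies in $P_N$, because any such $\omega$ is a $C^\infty$-combination $\omega=\sum a_r\eta^r$ of order-$\leq N$ generators $\eta^r$, and by the Leibniz rule $L_{v_i}\omega=\sum\big((L_{v_i}a_r)\eta^r+a_r\,L_{v_i}\eta^r\big)$, where the first terms lie in $P_N$ and the second terms lie in $P_{N+1}=P_N$. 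This gives invariance $L_D\hat P\subset\hat P$. I would use the criterion stated just before the proposition, namely that it suffices to check $L_{v_i}\omega^j\in\hat P$ on bases.

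For minimality and uniqueness, I would argue that $\hat P$ is contained in \emph{every} codistribution $Q$ that satisfies $P\subset Q$ and $L_D Q\subset Q$. This is a straightforward induction on the Lie-derivative order: the order-$0$ generators $\omega^j$ lie in $Q$ since $P\subset Q$, and if every order-$\leq\ell$ generator lies in $Q$, then applying $L_{v_i}$ and using $L_D Q\subset Q$ shows every order-$(\ell+1)$ generator lies in $Q$ as well; hence $\hat P\subset Q$. In particular $\hat P$ has minimal dimension among all invariant codistributions containing $P$, and if $Q$ is any such codistribution of the same minimal dimension, then $\hat P\subset Q$ forces $\hat P=Q$, which establishes uniqueness.

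The main obstacle, and the point deserving the most care, is the finiteness/stabilization argument, since it is the only place where the locally-constant-dimension hypothesis is essential: without it the dimensions of the $P_\ell$ could in principle jump at special points and the chain need not stabilize to a smooth codistribution. I expect the verification that invariance genuinely holds once the chain stabilizes — the Leibniz-rule computation above — to be the only mildly technical computation, and everything else (the two inclusions $P\subset\hat P\subset Q$ and the extraction of uniqueness from minimality) to be formal.
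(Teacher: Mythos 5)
Your proof is correct and takes essentially the same route as the paper's: both build an increasing chain of codistributions by adjoining Lie derivatives along a basis of $D$, use the dimension bound $n$ to force stabilization, and obtain minimality and uniqueness by showing the stabilized codistribution lies inside every invariant codistribution containing $P$. The only cosmetic difference is that you index the chain by the order of iterated Lie derivatives of the original basis and spell out the Leibniz-rule verification that stabilization implies invariance, whereas the paper re-chooses a basis at each step and adds its first-order Lie derivatives, treating that verification as immediate.
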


\begin{proof}
According to the definition of invariance, every invariant extension
of $P$ must contain the Lie derivatives $L_{v_{i}}\omega^{j}$ of
the 1-forms $\omega^{1},\ldots,\omega^{p}$ of $P$ w.r.t. all vector
fields $v_{1},\ldots,v_{d}$ of $D$. Hence, defining
\begin{equation}
P^{(1)}\negthinspace\negthinspace=\negthinspace\mathrm{span}\{\omega^{1},\ldots,\omega^{p},L_{v_{i}}\omega^{j},i\negthinspace=\negthinspace1,\ldots,d,j\negthinspace=\negthinspace1,\ldots,p\},\label{eq:invariant_extension_first_step}
\end{equation}
the smallest invariant extension $\hat{P}$ of $P$ meets $P^{(1)}\subset\hat{P}$.
Now let $\{\bar{\omega}^{1},\ldots,\bar{\omega}^{\bar{p}}\}$ with
$\bar{p}=\dim(P^{(1)})$ be a basis for $P^{(1)}$, i.e., a maximal
set of linearly independent 1-forms selected from the 1-forms in (\ref{eq:invariant_extension_first_step}).
Then we can define
\[
P^{(2)}\negthinspace\negthinspace=\negthinspace\mathrm{span}\{\bar{\omega}^{1},\ldots,\bar{\omega}^{\bar{p}},L_{v_{i}}\bar{\omega}^{j},i\negthinspace=\negthinspace1,\ldots,d,j\negthinspace=\negthinspace1,\ldots,\bar{p}\},
\]
and with the same argumentation as before, $P^{(2)}\subset\hat{P}$
follows. This procedure can be repeated until for some $k$ we have
$P^{(k+1)}=P^{(k)}$ , which means that $P^{(k)}$ is invariant w.r.t.
the distribution $D$. Because of $P^{(k)}\subset\hat{P}$ and the
invariance of $P^{(k)}$, we then actually have $P^{(k)}=\hat{P}$,
i.e., $P^{(k)}$ is the smallest invariant extension $\hat{P}$ of
$P$. Since the dimension of a codistribution cannot exceed the dimension
of the underlying manifold, $k\leq n-p$ with $n=\dim(\mathcal{M})=n$
and $p=\dim(P)$.
\end{proof}
From the proof of Proposition \ref{prop:invariant_extension}, it
can be observed that the smallest invariant extension $\hat{P}$ always
possesses a basis which consists only of the 1-forms $\omega^{1},\ldots,\omega^{p}$
of the original codistribution $P$ and their (higher order, repeated
and mixed) Lie derivatives. The following example illustrates the
construction of the smallest invariant extension of a codistribution.
\begin{example}
\label{exa:invariance_continued}Consider the codistribution $P$
of Example \ref{exa:invariance}, and the distribution $D=\mathrm{span}\{v_{1},v_{2}\}$
with $v_{1}=\partial_{x^{3}}$ and $v_{2}=\partial_{x^{4}}$. Since
all of the Lie derivatives
\begin{equation}
\begin{array}{ccl}
L_{v_{1}}\omega^{1} & = & \mathrm{d}x^{2}\\
L_{v_{1}}\omega^{2} & = & 0\\
L_{v_{2}}\omega^{1} & = & 0\\
L_{v_{2}}\omega^{2} & = & -x^{2}\mathrm{d}x^{1}+2x^{4}\mathrm{d}x^{4}
\end{array}\label{eq:example_invariance_continued_Lie_derivatives-1}
\end{equation}
except $L_{v_{2}}\omega^{2}$ are contained in $P$, the basis of
$P$ has to be extended by the 1-form $L_{v_{2}}\omega^{2}$. Now
it has to be checked if the resulting codistribution $P^{(1)}=\mathrm{span}\{\omega^{1},\omega^{2},L_{v_{2}}\omega^{2}\}$
is already invariant w.r.t. the distribution $D$. By construction,
all Lie derivatives (\ref{eq:example_invariance_continued_Lie_derivatives-1})
are contained in $P^{(1)}$. Since furthermore the Lie derivatives
\begin{align*}
L_{v_{1}}(L_{v_{2}}\omega^{2}) & =0\\
L_{v_{2}}^{2}\omega^{2} & =2\mathrm{d}x^{4}
\end{align*}
of the added 1-form $L_{v_{2}}\omega^{2}$ are also contained in $P^{(1)}$,
the smallest codistribution which contains $P$ and is invariant w.r.t.
$D$ is given by $\hat{P}=P^{(1)}$.
\end{example}

If the distribution $D$ is involutive and meets $D\rfloor P=0$,
which means that $v\rfloor\omega=0$ for all $v\in D$ and $\omega\in P$,
then the following result for $\hat{P}$ can be shown.
\begin{prop}
Consider a codistribution $P$ and a distribution $D$, and let $\hat{P}$
denote the smallest codistribution which contains $P$ and is invariant
w.r.t. $D$. If $D\rfloor P=0$ and $D$ is involutive, then also
$D\rfloor\hat{P}=0$.
\end{prop}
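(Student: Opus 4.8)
The plan is to exploit the explicit construction of $\hat{P}$ from the proof of Proposition~\ref{prop:invariant_extension}, where $\hat{P}=P^{(k)}$ is reached after finitely many steps, each step $P^{(\ell)}\to P^{(\ell+1)}$ adjoining the Lie derivatives $L_{v_{i}}\bar{\omega}^{j}$ of the current basis 1-forms along the basis vector fields $v_{1},\ldots,v_{d}$ of $D$. I would therefore show by induction on $\ell$ that $D\rfloor P^{(\ell)}=0$; since $\hat{P}=P^{(k)}$ for some finite $k$, the assertion $D\rfloor\hat{P}=0$ follows. The base case $\ell=0$ is exactly the hypothesis $D\rfloor P=0$. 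Because contraction is $C^{\infty}(\mathcal{M})$-bilinear, it suffices to verify the annihilation property on an arbitrary basis $\{v_{1},\ldots,v_{d}\}$ of $D$ and on the generating 1-forms of each $P^{(\ell)}$; no separate argument for general elements is then required.

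The key ingredient for the inductive step is the Leibniz-type identity $L_{v}(w\rfloor\omega)=[v,w]\rfloor\omega+w\rfloor L_{v}\omega$, valid for any vector fields $v,w$ and any 1-form $\omega$, which can be checked directly from the coordinate expressions for the Lie derivative and the Lie bracket recalled in Section~\ref{sec:Notation}. Assuming $D\rfloor P^{(\ell)}=0$, every new generator of $P^{(\ell+1)}$ has the form $L_{v_{i}}\bar{\omega}^{j}$ with $\bar{\omega}^{j}$ a basis 1-form of $P^{(\ell)}$. For an arbitrary $w\in D$ I would apply the identity with $v=v_{i}$ and $\omega=\bar{\omega}^{j}$: on the left-hand side $w\rfloor\bar{\omega}^{j}=0$ by the induction hypothesis, so that $L_{v_{i}}(w\rfloor\bar{\omega}^{j})=0$; on the right-hand side $[v_{i},w]\in D$ because $D$ is involutive, whence $[v_{i},w]\rfloor\bar{\omega}^{j}=0$, again by the induction hypothesis. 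Rearranging yields $w\rfloor L_{v_{i}}\bar{\omega}^{j}=0$. Since the previously present generators of $P^{(\ell+1)}$ already lie in $P^{(\ell)}$ and are annihilated by $D$, this shows $D\rfloor P^{(\ell+1)}=0$.

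This closes the induction and proves the proposition. The only place where both hypotheses are genuinely used is this single step: $D\rfloor P=0$ (propagated as $D\rfloor P^{(\ell)}=0$) provides the vanishing needed on both sides of the identity, while the involutivity of $D$ is precisely what guarantees $[v_{i},w]\in D$, so that the induction hypothesis may be applied to the bracket term. I expect this bracket term to be the main—and essentially the only—obstacle: without involutivity one cannot control $[v_{i},w]\rfloor\bar{\omega}^{j}$, and the conclusion would in general fail.
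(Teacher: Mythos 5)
Your proof is correct and takes essentially the same route as the paper: both exploit the step-by-step construction of $\hat{P}$ by adjoining Lie derivatives, and both show each newly added generator is still annihilated by $D$ through an identity whose bracket term is controlled by involutivity, with your induction on the stages $P^{(\ell)}$ being the precise form of the paper's ``repeating this argumentation'' for higher-order derivatives. The only cosmetic difference is that you invoke the Leibniz identity $L_{v}(w\rfloor\omega)=[v,w]\rfloor\omega+w\rfloor L_{v}\omega$ directly, whereas the paper obtains the same relation by combining Cartan's magic formula with the identity for $w\rfloor\left(v\rfloor\mathrm{d}\omega\right)$.
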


\begin{proof}
The proof is based on the fact that a basis for $\hat{P}$ can be
constructed by simply adding suitable Lie derivatives to a basis of
$P$ as shown above. According to Cartan's magic formula, the Lie
derivative of a 1-form $\omega$ along a vector field $v$ is given
by
\begin{equation}
L_{v}\omega=v\rfloor\mathrm{d}\omega+\mathrm{d}(v\rfloor\omega)\,.\label{eq:Cartan_magic_formula}
\end{equation}
Since $v\rfloor\omega=0$ holds for all 1-forms $\omega$ of $P$
and vector fields $v$ of $D$, the first-order Lie derivatives are
of the form $L_{v}\omega=v\rfloor\mathrm{d}\omega$. With $w$ denoting
another arbitrary vector field of $D$ and using the identity\footnote{See e.g. \cite{Lee:2012}.}
\[
w\rfloor\left(v\rfloor\mathrm{d}\omega\right)=L_{v}\left(w\rfloor\omega\right)-L_{w}\left(v\rfloor\omega\right)-\left[v,w\right]\rfloor\omega\,,
\]
we get
\[
w\rfloor L_{v}\omega=L_{v}\left(w\rfloor\omega\right)-L_{w}\left(v\rfloor\omega\right)-\left[v,w\right]\rfloor\omega\,.
\]
Because of $D\rfloor P=0$ and the involutivity of $D$, which implies
$\left[v,w\right]\in D$, all terms on the right-hand side vanish
and hence $w\rfloor L_{v}\omega=0$. Consequently, since $w$ is an
arbitrary vector field of $D$, the first-order Lie derivatives meet
$D\rfloor L_{v}\omega=0$. Repeating this argumentation shows that
this is also true for all higher-order Lie derivatives, and hence
$D\rfloor\hat{P}=0$.
\end{proof}
With the additional property $D\rfloor P=0$, invariance is closely
related to the notion of Cauchy-characteristic vector fields and distributions,
which is discussed in detail e.g. in \cite{Schoberl:2014}, \cite{Stormark:2000},
or \cite{BryantChernGardnerGoldschmidtGriffiths:1991}.
\begin{defn}
(\cite{Schoberl:2014}) A vector field $v$ is called a Cauchy-characteristic
vector field of a codistribution $P$ if\footnote{Here $v\rfloor\mathrm{d}P\subset P$ is an abbreviation for $v\rfloor\mathrm{d}\omega\subset P\,,\,\forall\omega\in P$.}
\begin{equation}
v\rfloor P=0\quad\text{and}\quad v\rfloor\mathrm{d}P\subset P\,.\label{eq:condition_cauchy}
\end{equation}
The set of all Cauchy-characteristic vector fields forms the Cauchy-characteristic
distribution $\mathcal{C}(P)$, which is involutive.
\end{defn}

The importance of Cauchy-characteristic vector fields lies in the
existence of coordinate transformations such that $P$ can be represented
by a reduced number of coordinates. More precisely, if an arbitrary
vector field $v\in\mathcal{C}(P)$ is straightened out by the flow-box
theorem using a coordinate transformation $\bar{x}=\Phi(x)$ such
that $v=\partial_{\bar{x}^{1}}$, then there exists a basis for $P$
which is independent of $\bar{x}^{1}$. Since $\mathcal{C}(P)$ is
involutive, the Frobenius theorem allows to straighten out even the
whole distribution. In such coordinates, there exists a basis for
$P$ which is independent of the corresponding $\dim(\mathcal{C}(P))$
coordinates (see \cite{Schoberl:2014}, \cite{Stormark:2000}, or
\cite{BryantChernGardnerGoldschmidtGriffiths:1991}).
\begin{example}
\label{exa:Cauchy}Consider the codistribution
\[
P=\mathrm{span}\{\omega^{1},\omega^{2}\}
\]
with $\omega^{1}=\mathrm{d}x^{2}+x^{1}\mathrm{d}x^{3}$ and $\omega^{2}=\mathrm{d}x^{1}-\mathrm{d}x^{3}$
as well as the vector field $v=\partial_{x^{1}}-x^{1}\partial_{x^{2}}+\partial_{x^{3}}$
on a manifold with coordinates $(x^{1},x^{2},x^{3})$. Since $v\rfloor\omega^{1}=0$,
$v\rfloor\omega^{2}=0$, and the 1-forms
\[
\begin{array}{ccl}
v\rfloor\mathrm{d}\omega^{1} & = & -\mathrm{d}x^{1}+\mathrm{d}x^{3}\\
v\rfloor\mathrm{d}\omega^{2} & = & 0
\end{array}
\]
are contained in $P$, the vector field $v$ meets the condition (\ref{eq:condition_cauchy})
and is hence a Cauchy-characteristic vector field of $P$. An application
of the flow-box theorem yields a coordinate transformation $(\bar{x}^{1},\bar{x}^{2},\bar{x}^{3})=(x^{1},\tfrac{(x^{1})^{2}}{2}+x^{2},x^{3}-x^{1})$
such that $v=\partial_{\bar{x}^{1}}$. In these coordinates, the 1-forms
$\omega^{1}$ and $\omega^{2}$ which span $P$ are given by $\omega^{1}=\mathrm{d}\bar{x}^{2}+\bar{x}^{1}\mathrm{d}\bar{x}^{3}$
and $\omega^{2}=-\mathrm{d}\bar{x}^{3}$. By constructing suitable
linear combinations, the codistribution can indeed be written in the
form
\[
P=\mathrm{span}\{\mathrm{d}\bar{x}^{2},\mathrm{d}\bar{x}^{3}\}\,,
\]
with a basis that is independent of $\bar{x}^{1}$.
\end{example}

In the main part of the paper, we make use of the following straightforward
observation.
\begin{prop}
\label{prop:D_subset_Cauchy}If a codistribution $P$ is invariant
w.r.t. a distribution $D$ which meets $D\rfloor P=0$, then $D$
is a subdistribution of the Cauchy-characteristic distribution $\mathcal{C}(P)$
of $P$.
\end{prop}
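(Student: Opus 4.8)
The plan is to show that every vector field $v \in D$ satisfies the two defining conditions (\ref{eq:condition_cauchy}) of a Cauchy-characteristic vector field of $P$; since this then holds for an arbitrary element of $D$, the inclusion $D \subset \mathcal{C}(P)$ follows immediately from the definition of $\mathcal{C}(P)$.

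First I would observe that the requirement $v \rfloor P = 0$ is nothing but the hypothesis $D \rfloor P = 0$ applied to the single vector field $v$, so the first part of (\ref{eq:condition_cauchy}) holds by assumption. It remains to verify the second part, namely that $v \rfloor \mathrm{d}\omega \in P$ for every $\omega \in P$.

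For this, the key tool is Cartan's magic formula (\ref{eq:Cartan_magic_formula}), $L_v \omega = v \rfloor \mathrm{d}\omega + \mathrm{d}(v \rfloor \omega)$. Because $D \rfloor P = 0$, the contraction $v \rfloor \omega$ vanishes identically as a function, and hence its exterior derivative $\mathrm{d}(v \rfloor \omega)$ vanishes as well. The magic formula therefore reduces to $L_v \omega = v \rfloor \mathrm{d}\omega$. By the assumed invariance of $P$ with respect to $D$ we have $L_v \omega \in P$, and consequently $v \rfloor \mathrm{d}\omega = L_v \omega \in P$, which is exactly the second condition in (\ref{eq:condition_cauchy}).

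There is essentially no hard step here: the statement is a direct consequence of Cartan's formula combined with the two hypotheses, which is why the paper flags it as a straightforward observation. The only point worth stating carefully is that $v \rfloor \omega = 0$ forces $\mathrm{d}(v \rfloor \omega) = 0$, so that the second term of the magic formula drops out and the Lie derivative of $\omega$ coincides with the contraction $v \rfloor \mathrm{d}\omega$; everything else follows by unwinding the definitions of invariance and of $\mathcal{C}(P)$.
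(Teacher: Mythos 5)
Your proposal is correct and follows essentially the same route as the paper's own (very brief) proof: both verify the two defining conditions (\ref{eq:condition_cauchy}) for each $v\in D$ by invoking Cartan's magic formula (\ref{eq:Cartan_magic_formula}), noting that $v\rfloor\omega=0$ kills the term $\mathrm{d}(v\rfloor\omega)$ so that $v\rfloor\mathrm{d}\omega=L_v\omega\in P$ by invariance. You merely spell out the details that the paper leaves implicit.
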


\begin{proof}
The Lie derivative of a 1-form $\omega$ along a vector field $v$
is given by (\ref{eq:Cartan_magic_formula}). Thus, every vector field
$v$ with $v\rfloor P=0$ which meets the invariance condition $L_{v}P\subset P$
also meets the condition (\ref{eq:condition_cauchy}) for a Cauchy-characteristic
vector field of $P$. Consequently, $D\subset\mathcal{C}(P)$.
\end{proof}
In contrast to the Cauchy-characteristic distribution, the distribution
$D$ of Proposition \ref{prop:D_subset_Cauchy} is not necessarily
involutive. However, if $D$ is involutive, it can also be straightened
out by the Frobenius theorem, and then because of $D\subset\mathcal{C}(P)$
there exists a basis for $P$ which is independent of the corresponding
$\dim(D)$ coordinates. The following corollary summarizes and combines
the content of this section such that we can directly apply it in
the remainder of the paper.
\begin{cor}
\label{cor:Invariance_Cauchy_summarized}Consider a codistribution
$P$ and an involutive distribution $D$ with $D\rfloor P=0$. Then
the following holds.
\begin{enumerate}
\item \label{enu:Invariance_Cauchy_summarized_1}There exists a unique smallest
codistribution $\hat{P}$ which contains $P$ and is invariant w.r.t.
$D$.
\item \label{enu:Invariance_Cauchy_summarized_2}The codistribution $\hat{P}$
meets $D\rfloor\hat{P}=0$.
\item \label{enu:Invariance_Cauchy_summarized_3}The distribution $D$ is
contained in the Cauchy-characteristic distribution of $\hat{P}$,
i.e., $D\subset\mathcal{C}(\hat{P})$.
\item \label{enu:Invariance_Cauchy_summarized_4}After performing a coordinate
transformation $\bar{x}=\Phi(x)$ such that $D=\mathrm{span}\{\partial_{\bar{x}^{1}},\ldots,\partial_{\bar{x}^{d}}\}$,
there exists a basis for $\hat{P}$ which is independent of $\bar{x}^{1},\ldots,\bar{x}^{d}$.
\end{enumerate}
\end{cor}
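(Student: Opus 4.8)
The plan is to assemble Corollary~\ref{cor:Invariance_Cauchy_summarized} directly from the four results already established in this section, since each claim corresponds to one of them. Claim~(i) is nothing but Proposition~\ref{prop:invariant_extension}, which guarantees a unique smallest invariant extension $\hat{P}$ of $P$ with respect to the distribution $D$; the hypotheses $D\rfloor P=0$ and involutivity of $D$ are not even needed for this part, so I would simply invoke that proposition.

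For claim~(ii), I would apply the second proposition of this section (the one asserting $D\rfloor\hat{P}=0$), whose hypotheses are exactly the standing assumptions of the corollary, namely $D\rfloor P=0$ together with involutivity of $D$. Thus $D\rfloor\hat{P}=0$ follows immediately. With this in hand, claim~(iii) follows from Proposition~\ref{prop:D_subset_Cauchy}: $\hat{P}$ is invariant w.r.t.\ $D$ by construction (it is the smallest invariant extension from claim~(i)), and by claim~(ii) it meets $D\rfloor\hat{P}=0$, so Proposition~\ref{prop:D_subset_Cauchy} applied to the pair $(\hat{P},D)$ yields $D\subset\mathcal{C}(\hat{P})$.

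Finally, claim~(iv) is the geometric payoff of claim~(iii) combined with involutivity. Since $D$ is involutive, the Frobenius theorem provides a coordinate transformation $\bar{x}=\Phi(x)$ straightening out $D$ to $\mathrm{span}\{\partial_{\bar{x}^{1}},\ldots,\partial_{\bar{x}^{d}}\}$. Because $D\subset\mathcal{C}(\hat{P})$ and each $\partial_{\bar{x}^{i}}$ is then a Cauchy-characteristic vector field of $\hat{P}$, the property of Cauchy-characteristic vector fields recalled after the definition of $\mathcal{C}(P)$ — that straightening them out yields a basis of the codistribution independent of the corresponding coordinates — delivers a basis for $\hat{P}$ independent of $\bar{x}^{1},\ldots,\bar{x}^{d}$. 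I would remark that one may straighten out all $d$ characteristic directions simultaneously precisely because $D$ is involutive, which is why involutivity reappears here.

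I do not expect any genuine obstacle: the corollary is a packaging result, and each of its four items is a verbatim or near-verbatim restatement of an earlier proposition applied to $\hat{P}$. The only point requiring slight care is the logical order, namely that claim~(iii) must cite claim~(ii) (to supply $D\rfloor\hat{P}=0$) rather than the original hypothesis $D\rfloor P=0$, and that claim~(iv) relies on the simultaneous-straightening argument for the involutive distribution $D$ that was spelled out in the preliminaries and in the discussion following the definition of $\mathcal{C}(P)$.
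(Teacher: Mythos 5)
Your proposal is correct and matches the paper's intent exactly: the corollary is presented there as a summary that ``combines the content of this section,'' and your assembly---claim~(i) from Proposition~\ref{prop:invariant_extension}, claim~(ii) from the proposition on $D\rfloor\hat{P}=0$, claim~(iii) from Proposition~\ref{prop:D_subset_Cauchy} applied to $\hat{P}$ (correctly using claim~(ii) rather than $D\rfloor P=0$), and claim~(iv) from the Frobenius theorem together with the Cauchy-characteristic reduction discussed after Proposition~\ref{prop:D_subset_Cauchy}---is precisely the implicit argument. No gaps.
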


\section{\protect\label{sec:Dual-Test-Forward-Flatness}Dual Test for Forward-flatness}

In this section we introduce a certain sequence of codistributions,
and subsequently show how it is related to system decompositions and
forward-flatness.

\subsection{Definition of the Sequence of Codistributions}

In the following, we define a sequence of codistributions which allows
to efficiently check the forward-flatness of nonlinear discrete-time
systems (\ref{eq:sys}). For a given system the sequence is uniquely
determined, i.e., there occur no degrees of freedom. The main mathematical
operations are the intersection of codistributions, the calculation
of Lie derivatives of 1-forms to determine the smallest invariant
extensions of codistributions according to Proposition \ref{prop:invariant_extension},
and backward-shifts of 1-forms. Hence, solving ODEs or even PDEs is
not required. The calculations are performed on the $(n+m)$-dimensional
state- and input manifold $\mathcal{X}\times\mathcal{U}$ with coordinates
$(x,u)$. In order to simplify the calculations as much as possible,
it can be convenient to introduce adapted coordinates
\begin{equation}
\begin{array}{ccl}
\theta^{i} & = & f^{i}(x,u)\,,\quad i=1,\ldots,n\\
\xi^{j} & = & h^{j}(x,u)\,,\quad j=1,\ldots,m
\end{array}\label{eq:adapted_coordinates}
\end{equation}
instead of $x$ and $u$, since then $\mathrm{span}\{\mathrm{d}f\}=\mathrm{span}\{\mathrm{d}\theta\}$
and $\mathrm{span}\{\mathrm{d}f\}^{\perp}=\mathrm{span}\{\partial_{\xi}\}$.
Because of the submersivity property (\ref{eq:submersivity}), there
always exist functions $h^{j}(x,u)$ such that the Jacobian matrix
of the right-hand side of (\ref{eq:adapted_coordinates}) is regular
and the transformation hence invertible.

\begin{algo}\label{alg:definition_sequence}

Start with $P_{1}=\mathrm{span}\{\mathrm{d}x\}$, and repeat the following
steps for $k\geq1$:\textbf{\vspace{-0.2cm}
}
\begin{enumerate}
\item[\emph{1.}] Compute the intersection $P_{k}\cap\mathrm{span}\{\mathrm{d}f\}$.
\item[\emph{2.}] Determine the smallest codistribution $P_{k+1}^{+}$ which is invariant
w.r.t. the distribution $\mathrm{span}\{\mathrm{d}f\}^{\perp}$ and
contains $P_{k}\cap\mathrm{span}\{\mathrm{d}f\}$.\footnote{In case $P_{k}\cap\mathrm{span}\{\mathrm{d}f\}$ is already invariant
w.r.t. $\mathrm{span}\{\mathrm{d}f\}^{\perp}$, this step is trivial
since $P_{k+1}^{+}=P_{k}\cap\mathrm{span}\{\mathrm{d}f\}$.}
\item[\emph{3.}] Define $P_{k+1}=\delta^{-1}(P_{k+1}^{+})$.\textbf{\vspace{-0.1cm}
\vspace{-0.1cm}
}
\end{enumerate}
Stop if $P_{\bar{k}+1}=P_{\bar{k}}$ for some $k=\bar{k}$.

\end{algo}

The algorithm defines a unique sequence of codistributions: Given
$P_{k}$, the intersection of Step 1 is clearly unique. According
to Proposition \ref{prop:invariant_extension} with $D=\mathrm{span}\{\mathrm{d}f\}^{\perp}$
and $P=P_{k}\cap\mathrm{span}\{\mathrm{d}f\}$, the smallest invariant
extension $P_{k+1}^{+}$ in Step 2 (corresponding to $\hat{P}$ of
Proposition \ref{prop:invariant_extension}) is also unique. Finally,
also the result of the backward-shift in Step 3 is unique. Since in
(\ref{eq:backward_shift_1form}) we have introduced the backward-shift
only for 1-forms of the form $\omega_{i}(f(x,u))\mathrm{d}f^{i}$,
it is important to show that $P_{k+1}^{+}$ indeed possesses a basis
consisting of such 1-forms. Because of item \ref{enu:Invariance_Cauchy_summarized_2}
of Corollary \ref{cor:Invariance_Cauchy_summarized}, like \emph{$P_{k}\cap\mathrm{span}\{\mathrm{d}f\}$}
also the codistribution $P_{k+1}^{+}$ is contained in $\mathrm{span}\{\mathrm{d}f\}$.
Moreover, because of item \ref{enu:Invariance_Cauchy_summarized_3}
and \ref{enu:Invariance_Cauchy_summarized_4} of Corollary \ref{cor:Invariance_Cauchy_summarized},
$P_{k+1}^{+}$ possesses even a basis with 1-forms of the form $\omega_{i}(f(x,u))\mathrm{d}f^{i}$,
where also the coefficients $\omega_{i}$ only depend on the functions
$f(x,u)$. This is obvious in adapted coordinates (\ref{eq:adapted_coordinates})
with $\mathrm{span}\{\mathrm{d}f\}=\mathrm{span}\{\mathrm{d}\theta\}$
and $\mathrm{span}\{\mathrm{d}f\}^{\perp}=\mathrm{span}\{\partial_{\xi}\}$.
Thus, in Step 3, a basis for $P_{k+1}$ is obtained by applying (\ref{eq:backward_shift_1form})
to these 1-forms.
\begin{example}
Let us compute the codistribution $P_{2}$ for the system (\ref{eq:runningexample_sys})
of Example \ref{exa:runningexample_flatness}.\\
Step 1: The intersection of $P_{1}=\mathrm{span}\{\mathrm{d}x^{1},\mathrm{d}x^{2},\mathrm{d}x^{3}\}$
and $\mathrm{span}\{\mathrm{d}f\}=\mathrm{span}\{-\mathrm{d}x^{2}+\mathrm{d}u^{1},(u^{1}-u^{2})\mathrm{d}x^{1}+x^{1}\mathrm{d}u^{1}-x^{1}\mathrm{d}u^{2},\mathrm{d}u^{2}\}$
is given by
\[
P_{1}\cap\mathrm{span}\{\mathrm{d}f\}=\mathrm{span}\{\omega^{1}\}
\]
with
\[
\begin{array}{ccc}
\omega^{1} & = & (u^{1}-u^{2})\mathrm{d}x^{1}+x^{1}\mathrm{d}x^{2}\end{array}\,.
\]
Step 2: The annihilator of $\mathrm{span}\{\mathrm{d}f\}$ is given
by
\[
\mathrm{span}\{\mathrm{d}f\}^{\perp}=\mathrm{span}\{v_{1},v_{2}\}
\]
with
\[
\begin{array}{ccl}
v_{1} & = & \partial_{x^{3}}\\
v_{2} & = & -x^{1}\partial_{x^{1}}+(u^{1}-u^{2})\partial_{x^{2}}+(u^{1}-u^{2})\partial_{u^{1}}\,.
\end{array}
\]
In order to determine the smallest codistribution which is invariant
w.r.t. $\mathrm{span}\{\mathrm{d}f\}^{\perp}$ and contains $P_{1}\cap\mathrm{span}\{\mathrm{d}f\}$,
we have to compute the Lie derivatives
\begin{equation}
\begin{array}{ccl}
L_{v_{1}}\omega^{1} & = & 0\\
L_{v_{2}}\omega^{1} & = & (u^{1}-u^{2})\mathrm{d}x^{1}+x^{1}\mathrm{d}u^{1}-x^{1}\mathrm{d}u^{2}
\end{array}\label{eq:runningexample_algorithm_Lie_derivatives_1}
\end{equation}
of $\omega^{1}$ w.r.t. the vector fields $v_{1}$ and $v_{2}$. Since
$L_{v_{2}}\omega^{1}$ is not contained in $P_{1}\cap\mathrm{span}\{\mathrm{d}f\}$,
the codistribution is not invariant and we have to extend its basis
by the 1-form $L_{v_{2}}\omega^{1}$. However, it can be verified
that the resulting codistribution $\mathrm{span}\{\omega^{1},L_{v_{2}}\omega^{1}\}$
is already invariant w.r.t. $\mathrm{span}\{\mathrm{d}f\}^{\perp}$.
The Lie derivatives (\ref{eq:runningexample_algorithm_Lie_derivatives_1})
are contained in $\mathrm{span}\{\omega^{1},L_{v_{2}}\omega^{1}\}$
by construction, and the Lie derivatives
\[
\begin{array}{rcc}
L_{v_{1}}(L_{v_{2}}\omega^{1}) & = & 0\\
L_{v_{2}}^{2}\omega^{1} & = & 0
\end{array}
\]
of the added 1-form $L_{v_{2}}\omega^{1}$ are obviously also contained
in $\mathrm{span}\{\omega^{1},L_{v_{2}}\omega^{1}\}$. Thus, we have
\[
P_{2}^{+}=\mathrm{span}\{\omega^{1},L_{v_{2}}\omega^{1}\}\,.
\]
Step 3: As discussed above, $P_{2}^{+}$ possesses a basis with 1-forms
of the form $\omega_{i}(f(x,u))\mathrm{d}f^{i}$. Indeed, it can be
written in the form
\[
P_{2}^{+}=\mathrm{span}\{-\mathrm{d}f_{1}+\mathrm{d}f_{3},\mathrm{d}f_{2}\}\,.
\]
Applying the backward-shift operator (\ref{eq:backward_shift_1form})
finally yields
\[
P_{2}=\delta^{-1}(P_{2}^{+})=\mathrm{span}\{-\mathrm{d}x^{1}+\mathrm{d}x^{3},\mathrm{d}x^{2}\}\,.
\]
It should be noted that here all calculations were performed in the
original coordinates $(x,u)$. However, as mentioned above, it can
be convenient to use adapted coordinates (\ref{eq:adapted_coordinates}).
We will demonstrate the application of Algorithm \ref{alg:definition_sequence}
with adapted coordinates by an example in Section \ref{sec:Example}.
\end{example}

\begin{rem}
Note that the computational effort is lower than in \cite{KolarDiwoldSchoberl:2019},
where the calculation of the corresponding sequence of distributions
requires the calculation of largest projectable subdistributions in
every step. The computation of the smallest invariant codistributions
in Step 2 of Algorithm \ref{alg:definition_sequence} by just adding
Lie derivatives of 1-forms can be considered here as the simpler task.
\end{rem}

In order to show that the stop condition of Algorithm \ref{alg:definition_sequence}
is reasonable, we prove now the following.
\begin{prop}
The codistributions $P_{1},\ldots,P_{\bar{k}}$ form a nested sequence
\begin{equation}
P_{\bar{k}}\subset P_{\bar{k}-1}\subset\ldots\subset P_{1}\,.\label{eq:sequence}
\end{equation}
\end{prop}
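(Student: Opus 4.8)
The plan is to prove the stronger statement $P_{k+1}\subset P_k$ for every $k\in\{1,\ldots,\bar{k}-1\}$ by induction on $k$; the nested chain (\ref{eq:sequence}) then follows at once. The whole argument rests on two monotonicity observations that I would isolate first. First, intersection with the fixed codistribution $\mathrm{span}\{\mathrm{d}f\}$ is trivially monotone, and by Proposition \ref{prop:invariant_extension} the passage to the smallest invariant extension w.r.t. $D=\mathrm{span}\{\mathrm{d}f\}^{\perp}$ is monotone as well: if $A\subset B$, then the smallest invariant codistribution containing $B$ is itself an invariant codistribution containing $A$, so by the minimality in Proposition \ref{prop:invariant_extension} the smallest invariant extension of $A$ is contained in it. Second, on codistributions possessing a basis of 1-forms $\omega_i(f(x,u))\mathrm{d}f^i$ whose coefficients depend only on $f$, the backward-shift $\delta^{-1}$ of (\ref{eq:backward_shift_1form}) acts as a mere relabeling and is therefore order-preserving. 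Granting these two facts, both the base case and the inductive step become routine.

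For the base case I would note that $P_2^{+}$ is the smallest invariant extension of $P_1\cap\mathrm{span}\{\mathrm{d}f\}$, so by item \ref{enu:Invariance_Cauchy_summarized_2} of Corollary \ref{cor:Invariance_Cauchy_summarized} it satisfies $P_2^{+}\subset\mathrm{span}\{\mathrm{d}f\}$. Applying (\ref{eq:backward_shift_1form}) to its basis 1-forms $\omega_i(f)\mathrm{d}f^i$ produces forms $\omega_i(x)\mathrm{d}x^i\in\mathrm{span}\{\mathrm{d}x\}=P_1$, whence $P_2=\delta^{-1}(P_2^{+})\subset P_1$. For the inductive step, assume $P_k\subset P_{k-1}$. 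Intersecting with $\mathrm{span}\{\mathrm{d}f\}$ gives $P_k\cap\mathrm{span}\{\mathrm{d}f\}\subset P_{k-1}\cap\mathrm{span}\{\mathrm{d}f\}$, and the monotonicity of the smallest invariant extension then yields $P_{k+1}^{+}\subset P_k^{+}$. Finally, the order-preservation of $\delta^{-1}$ gives $P_{k+1}=\delta^{-1}(P_{k+1}^{+})\subset\delta^{-1}(P_k^{+})=P_k$, which closes the induction.

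The step that needs the most care is the order-preservation of $\delta^{-1}$, since a priori the backward-shift is defined only form-by-form via (\ref{eq:backward_shift_1form}) rather than as a map on codistributions. The justification is that, by items \ref{enu:Invariance_Cauchy_summarized_3} and \ref{enu:Invariance_Cauchy_summarized_4} of Corollary \ref{cor:Invariance_Cauchy_summarized}, both $P_{k+1}^{+}$ and $P_k^{+}$ admit bases whose coefficients depend only on $f$; equivalently, in the adapted coordinates (\ref{eq:adapted_coordinates}) with $\mathrm{span}\{\mathrm{d}f\}=\mathrm{span}\{\mathrm{d}\theta\}$ they are spanned by 1-forms $\omega_i(\theta)\mathrm{d}\theta^i$ that are independent of $\xi$. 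When $P_{k+1}^{+}\subset P_k^{+}$, expressing a basis of the smaller codistribution in terms of a basis of the larger one produces combination coefficients that again depend only on $\theta$, so the inclusion is preserved verbatim after replacing $\theta$ by $x$. I would therefore verify this relabeling property once, at the level of codistributions with $f$-dependent coefficients, and then invoke it in the inductive step above.
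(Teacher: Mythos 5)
Your proposal is correct and follows essentially the same route as the paper's proof: induction on $k$, using monotonicity of the intersection with $\mathrm{span}\{\mathrm{d}f\}$, minimality of the invariant extension (giving $P_{k+1}^{+}\subset P_{k}^{+}$), and order-preservation of the backward-shift, with the base case settled by $P_{k+1}^{+}\subset\mathrm{span}\{\mathrm{d}f\}$ so that $P_{k+1}\subset\mathrm{span}\{\mathrm{d}x\}=P_{1}$. Your explicit justification that $\delta^{-1}$ preserves inclusions (via bases with $f$-dependent coefficients in adapted coordinates) is a detail the paper leaves implicit, but it is not a different approach.
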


\begin{proof}
First, let us show that all codistributions $P_{1},\ldots,P_{\bar{k}}$
are contained in $\mathrm{span}\{\mathrm{d}x\}$. Indeed, an application
of Corollary \ref{cor:Invariance_Cauchy_summarized} with $D=\mathrm{span}\{\mathrm{d}f\}^{\perp}$
and $P=P_{k}\cap\mathrm{span}\{\mathrm{d}f\}$ shows that \emph{$P_{k+1}^{+}\subset\mathrm{span}\{\mathrm{d}f\}$}
for all $k\geq1$. Because of $\delta^{-1}(\mathrm{d}f)=\mathrm{d}x$,
\[
P_{k+1}\subset\mathrm{span}\{\mathrm{d}x\}\,,\quad k\geq1
\]
follows. Next, we show that the codistributions $P_{1},\ldots,P_{\bar{k}}$
form a nested sequence. For $k=1$, because of $P_{1}=\mathrm{span}\{\mathrm{d}x\}$
this directly implies $P_{2}\subset P_{1}$. Now assume that for some
$k>1$ we have $P_{k}\subset P_{k-1}$. Then obviously also $P_{k}\cap\mathrm{span}\{\mathrm{d}f\}\subset P_{k-1}\cap\mathrm{span}\{\mathrm{d}f\}$
holds, and the invariant extensions computed in Step 2 of the procedure
meet $P_{k+1}^{+}\subset P_{k}^{+}$ (an invariant codistribution
which contains $P_{k-1}\cap\mathrm{span}\{\mathrm{d}f\}$ must also
contain $P_{k}\cap\mathrm{span}\{\mathrm{d}f\}\subset P_{k-1}\cap\mathrm{span}\{\mathrm{d}f\}$).
Applying the backward-shift operator to this relation yields $P_{k+1}\subset P_{k}$,
and by induction we finally get (\ref{eq:sequence}).
\end{proof}
Next, we address the integrability of the codistributions~(\ref{eq:sequence}).
\begin{prop}
The codistributions of the sequence (\ref{eq:sequence}) are integrable.
\end{prop}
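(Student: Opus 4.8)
The plan is to prove integrability by induction on the index $k$, exploiting the structure of the construction and the integrability statements already available for the objects involved. We want to show that each $P_{k+1}$ is integrable, given the integrability of $P_k$. The base case is trivial: $P_1=\mathrm{span}\{\mathrm{d}x\}=\mathrm{span}\{\mathrm{d}x^1,\ldots,\mathrm{d}x^n\}$ is clearly integrable, being spanned by the exact 1-forms $\mathrm{d}x^i$.

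First I would handle Step 1 of the algorithm. The intersection of two integrable codistributions need not in general be integrable, so this requires care; however, $\mathrm{span}\{\mathrm{d}f\}=\mathrm{span}\{\mathrm{d}f^1,\ldots,\mathrm{d}f^n\}$ is integrable (it is spanned by the exact 1-forms $\mathrm{d}f^i$), and $P_k$ is integrable by the induction hypothesis. The key observation is that both $P_k$ and $\mathrm{span}\{\mathrm{d}f\}$ can be simultaneously straightened out: passing to the adapted coordinates $(\theta,\xi)$ of (\ref{eq:adapted_coordinates}), we have $\mathrm{span}\{\mathrm{d}f\}=\mathrm{span}\{\mathrm{d}\theta\}$, and in coordinates where $P_k=\mathrm{span}\{\mathrm{d}z^1,\ldots,\mathrm{d}z^{p_k}\}$ is also straightened out, the intersection of two coordinate codistributions is again a coordinate codistribution and hence integrable. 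I would argue that since $P_k\subset\mathrm{span}\{\mathrm{d}x\}$ and is integrable, it has a basis of exact forms $\mathrm{d}g^1,\ldots$ depending only on $x$, and the intersection $P_k\cap\mathrm{span}\{\mathrm{d}f\}$ likewise admits a basis of exact 1-forms; thus this intersection is integrable.

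Next I would address Step 2. Here the codistribution $P_{k+1}^+$ is the smallest invariant extension of $P=P_k\cap\mathrm{span}\{\mathrm{d}f\}$ with respect to $D=\mathrm{span}\{\mathrm{d}f\}^\perp$. By Corollary \ref{cor:Invariance_Cauchy_summarized}, we know $D\subset\mathcal{C}(P_{k+1}^+)$, and we may straighten out $D=\mathrm{span}\{\partial_{\xi}\}=\mathrm{span}\{\partial_{x^{n+1}},\ldots\}$ (the $\partial_{\xi}$-directions) so that $P_{k+1}^+$ has a basis independent of the $\xi$-coordinates. In the adapted coordinates, $P_{k+1}^+\subset\mathrm{span}\{\mathrm{d}\theta\}$ has a basis whose coefficients depend only on $\theta$. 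I would then argue that an invariant extension of an integrable codistribution, built as the span of the starting forms together with their Lie derivatives along the $\partial_{\xi}$-fields, remains integrable: since the generators are forms in the $\theta$-variables only (equivalently, pullbacks from the $\theta$-space), the extension is the pullback of a codistribution on the lower-dimensional $\theta$-manifold, and integrability is preserved under such pullbacks. Concretely, $P_{k+1}^+$ descends to a codistribution on $\mathcal{X}$ via the coordinates $\theta=f(x,u)$.

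The main obstacle I anticipate is Step 2, specifically showing that forming the smallest invariant extension does not destroy integrability. The naive worry is that adding Lie derivatives $L_{v}\omega$ of exact forms could introduce non-closed forms whose wedge products with the basis fail to vanish. The clean way around this is the Cauchy-characteristic viewpoint: because $D\rfloor P=0$ and $D$ is involutive, Corollary \ref{cor:Invariance_Cauchy_summarized} guarantees $P_{k+1}^+$ possesses a basis independent of the straightened-out $D$-coordinates, so $P_{k+1}^+$ is effectively a codistribution on the quotient by $D$; there it coincides with the invariant extension of the integrable $P$, and one checks integrability is stable in this reduced setting. Finally, Step 3 is the easy part: the backward-shift $\delta^{-1}$ acts on 1-forms of the form $\omega_i(f)\,\mathrm{d}f^i$ by (\ref{eq:backward_shift_1form}), replacing $f$ by $x$; this is precisely the pullback under the identification $\theta\mapsto x$, and since pullbacks commute with the exterior derivative and preserve integrability conditions $\mathrm{d}\omega\wedge\omega^1\wedge\cdots\wedge\omega^p$, the integrability of $P_{k+1}^+$ transfers to $P_{k+1}=\delta^{-1}(P_{k+1}^+)$. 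This completes the induction.
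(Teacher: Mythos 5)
Your induction skeleton and your treatment of Step 3 (the backward shift acts as a pullback and hence preserves the Frobenius condition) agree with the paper, but the core of your argument rests on a claim that is false: that $P_{k}\cap\mathrm{span}\{\mathrm{d}f\}$ is integrable because $P_{k}$ and $\mathrm{span}\{\mathrm{d}f\}$ ``can be simultaneously straightened out.'' Two integrable codistributions cannot in general be straightened out simultaneously, and their intersection can fail to be integrable --- this is precisely the difficulty of the proposition. A concrete counterexample inside the setting of Algorithm \ref{alg:definition_sequence}: take $n=2$, $m=1$ and the submersive map $f^{1}=x^{1}+ux^{2}$, $f^{2}=u$. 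Then
\[
P_{1}\cap\mathrm{span}\{\mathrm{d}f\}=\mathrm{span}\{\mathrm{d}x^{1}+u\,\mathrm{d}x^{2}\}\,,
\]
and $\mathrm{d}(\mathrm{d}x^{1}+u\,\mathrm{d}x^{2})\wedge(\mathrm{d}x^{1}+u\,\mathrm{d}x^{2})=\mathrm{d}u\wedge\mathrm{d}x^{2}\wedge\mathrm{d}x^{1}\neq0$, so the intersection is \emph{not} integrable even though both $P_{1}$ and $\mathrm{span}\{\mathrm{d}f\}$ are. Note that the smallest invariant extension is $P_{2}^{+}=\mathrm{span}\{\mathrm{d}x^{1}+u\,\mathrm{d}x^{2},\mathrm{d}u\}=\mathrm{span}\{\mathrm{d}f^{1},\mathrm{d}f^{2}\}$, which \emph{is} integrable: the invariant extension of Step 2 repairs the defect created in Step 1. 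So any proof that tries to establish integrability of each intermediate object of the algorithm, as yours does, must fail; only $P_{k+1}^{+}$ (and hence $P_{k+1}$) is integrable.

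This is also why your Step 2 argument cannot be completed as stated: its premise (integrability of the intersection) is unavailable, and the observation that $P_{k+1}^{+}$ descends to the $\theta$-manifold only shows that it has a basis of the form $\omega_{i}(\theta)\,\mathrm{d}\theta^{i}$ --- a codistribution in the $\theta$-variables alone is just as capable of being non-integrable, so ``integrability is stable in the reduced setting'' is exactly the assertion that needs proof. The paper instead proves the Frobenius condition for $P_{k+1}^{+}=\mathrm{span}\{\omega,\rho\}$ directly, and crucially uses the integrability of the \emph{full} codistribution $P_{k}=\mathrm{span}\{\omega,\mu\}$, including the complementary forms $\mu$ that do not lie in $\mathrm{span}\{\mathrm{d}f\}$: in adapted coordinates one splits $\mathrm{d}\omega^{s}$ into a $\mathrm{d}\theta$-part and Lie-derivative terms, eliminates the latter by the invariance of $P_{k+1}^{+}$ w.r.t. $\mathrm{span}\{\mathrm{d}f\}^{\perp}$, and then removes the $\mu$'s from the wedge identity because no linear combination of them is contained in $\mathrm{span}\{\mathrm{d}\theta\}$; the added forms $\rho$ are handled by a separate induction over Lie derivatives using $\mathrm{d}L_{v}=L_{v}\mathrm{d}$ and the product rule. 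This interplay between the integrability of $P_{k}$ and the invariance of $P_{k+1}^{+}$ is the missing idea in your proposal.
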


\begin{proof}
For $k=1$, the codistribution $P_{1}=\mathrm{span}\{\mathrm{d}x\}$
is clearly integrable. In the following, we prove that for $k\geq1$
the integrability of $P_{k}$ implies the integrability of $P_{k+1}$.
In fact, we only need to prove the integrability of \emph{$P_{k+1}^{+}$},
since the application of the backward-shift operator in Step 3 of
the procedure does not affect the integrability. For this purpose,
let
\[
P_{k}\cap\mathrm{span}\{\mathrm{d}f\}=\mathrm{span}\{\omega^{1},\ldots,\omega^{d_{1}}\}
\]
and
\[
P_{k}=\mathrm{span}\{\omega^{1},\ldots,\omega^{d_{1}},\mu^{1},\ldots,\mu^{d_{2}}\}\,,
\]
with 1-forms $\mu$ that are not contained in $\mathrm{span}\{\mathrm{d}f\}$.
Furthermore, let
\begin{equation}
P_{k+1}^{+}=\mathrm{span}\{\omega^{1},\ldots,\omega^{d_{1}},\rho^{1},\ldots,\rho^{d_{3}}\}\,,\label{eq:Pk+1+}
\end{equation}
with the 1-forms $\rho$ denoting suitable Lie derivatives of the
1-forms $\omega$ according to the construction of an invariant codistribution
discussed in the proof of Proposition \ref{prop:invariant_extension}.
First, it is important to note that
\begin{equation}
\omega^{1}\wedge\ldots\wedge\omega^{d_{1}}\wedge\mu^{1}\wedge\ldots\wedge\mu^{d_{2}}\wedge\rho^{1}\wedge\ldots\wedge\rho^{d_{3}}\neq0\label{eq:linear_independence_wedge}
\end{equation}
since all these 1-forms are linearly independent. The 1-forms $\omega^{1},\ldots,\omega^{d_{1}},\rho^{1},\ldots\rho^{d_{3}}$
are clearly linearly independent since they form a basis for $P_{k+1}^{+}\subset\mathrm{span}\{\mathrm{d}f\}$,
and after adding $\mu^{1},\ldots,\mu^{d_{2}}$ the linear independence
still holds since there exists no linear combination of the latter
1-forms which is contained in $\mathrm{span}\{\mathrm{d}f\}$. The
codistribution (\ref{eq:Pk+1+}) is now integrable if and only if
\begin{equation}
\mathrm{d}\omega^{s}\wedge\omega^{1}\wedge\ldots\wedge\omega^{d_{1}}\wedge\rho^{1}\wedge\ldots\wedge\rho^{d_{3}}=0\,,\quad s=1,\ldots,d_{1}\label{eq:WedgeProduct_1}
\end{equation}
as well as
\begin{equation}
\mathrm{d}\rho^{s}\wedge\omega^{1}\wedge\ldots\wedge\omega^{d_{1}}\wedge\rho^{1}\wedge\ldots\wedge\rho^{d_{3}}=0\,,\quad s=1,\ldots,d_{3}\,.\label{eq:WedgeProduct_2}
\end{equation}

To prove (\ref{eq:WedgeProduct_1}), we make use of the assumption
that $P_{k}$ is integrable, which implies
\[
\mathrm{d}\omega^{s}\wedge\omega^{1}\wedge\ldots\wedge\omega^{d_{1}}\wedge\mu^{1}\wedge\ldots\wedge\mu^{d_{2}}=0\,,
\]
and because of (\ref{eq:linear_independence_wedge}) also
\begin{equation}
\mathrm{d}\omega^{s}\wedge\omega^{1}\wedge\ldots\wedge\omega^{d_{1}}\wedge\mu^{1}\wedge\ldots\wedge\mu^{d_{2}}\wedge\rho^{1}\wedge\ldots\wedge\rho^{d_{3}}=0\,.\label{eq:dw_wedge_all}
\end{equation}
In the following, it is convenient to introduce adapted coordinates
(\ref{eq:adapted_coordinates}) since then $\mathrm{span}\{\mathrm{d}f\}=\mathrm{span}\{\mathrm{d}\theta\}$
and $\mathrm{span}\{\mathrm{d}f\}^{\perp}=\mathrm{span}\{\partial_{\xi}\}$.
In such coordinates, the 1-forms $\omega$ have the form
\[
\omega^{s}=\omega_{i}^{s}(\theta,\xi)\mathrm{d}\theta^{i}\,,
\]
and their exterior derivative can be written as
\begin{align}
\mathrm{d}\omega^{s} & =\partial_{\theta^{k}}\omega_{i}^{s}\mathrm{d}\theta^{k}\wedge\mathrm{d}\theta^{i}+\partial_{\xi^{j}}\omega_{i}^{s}\mathrm{d}\xi^{j}\wedge\mathrm{d}\theta^{i}\nonumber \\
 & =\partial_{\theta^{k}}\omega_{i}^{s}\mathrm{d}\theta^{k}\wedge\mathrm{d}\theta^{i}-L_{\partial_{\xi^{j}}}(\omega^{s})\wedge\mathrm{d}\xi^{j}\label{eq:dw_evaluated}
\end{align}
with $L_{\partial_{\xi^{j}}}(\omega^{s})$ denoting the Lie derivative
of $\omega^{s}$ along the vector field $\partial_{\xi^{j}}$. Because
of $L_{\partial_{\xi^{j}}}(\omega^{s})\in P_{k+1}^{+}=\mathrm{span}\{\omega,\rho\}$
(invariance of $P_{k+1}^{+}$ w.r.t. $\mathrm{span}\{\mathrm{d}f\}^{\perp}=\mathrm{span}\{\partial_{\xi}\}$)
we have
\begin{equation}
L_{\partial_{\xi^{j}}}(\omega^{s})\wedge\mathrm{d}\xi^{j}\wedge\omega^{1}\wedge\ldots\wedge\omega^{d_{1}}\wedge\rho^{1}\wedge\ldots\wedge\rho^{d_{3}}=0\,,\label{eq:Lie_derivative_wedge_product}
\end{equation}
and hence substituting (\ref{eq:dw_evaluated}) into (\ref{eq:dw_wedge_all})
yields
\begin{multline}
\partial_{\theta^{k}}\omega_{i}^{s}\mathrm{d}\theta^{k}\wedge\mathrm{d}\theta^{i}\wedge\omega^{1}\wedge\ldots\wedge\omega^{d_{1}}\\
\wedge\mu^{1}\wedge\ldots\wedge\mu^{d_{2}}\wedge\rho^{1}\wedge\ldots\wedge\rho^{d_{3}}=0\,,\label{eq:dw_wedge_all_evaluated}
\end{multline}
i.e., the terms with $L_{\partial_{\xi^{j}}}(\omega^{s})\wedge\mathrm{d}\xi^{j}$
vanish. Since there exists no linear combination of the 1-forms $\mu$
which is contained in $\mathrm{span}\{\mathrm{d}f\}=\mathrm{span}\{\mathrm{d}\theta\}$,
(\ref{eq:dw_wedge_all_evaluated}) is equivalent to
\begin{equation}
\partial_{\theta^{k}}\omega_{i}^{s}\mathrm{d}\theta^{k}\wedge\mathrm{d}\theta^{i}\wedge\omega^{1}\wedge\ldots\wedge\omega^{d_{1}}\wedge\rho^{1}\wedge\ldots\wedge\rho^{d_{3}}=0\,.\label{eq:dw_wedge_all_evaluated_reduced}
\end{equation}
Subtracting (\ref{eq:Lie_derivative_wedge_product}) from (\ref{eq:dw_wedge_all_evaluated_reduced})
and replacing $\partial_{\theta^{k}}\omega_{i}^{s}\mathrm{d}\theta^{k}\wedge\mathrm{d}\theta^{i}-L_{\partial_{\xi^{j}}}(\omega^{s})\wedge\mathrm{d}\xi^{j}$
by $\mathrm{d}\omega^{s}$ according to (\ref{eq:dw_evaluated}) finally
shows that (\ref{eq:WedgeProduct_1}) indeed holds. Next, we also
have to prove (\ref{eq:WedgeProduct_2}). We do this by induction,
using the fact that the 1-forms $\rho$ are just Lie derivatives (possibly
also higher order) of the 1-forms $\omega$ along vector fields $v\in\mathrm{span}\{\mathrm{d}f\}^{\perp}$.
Let us assume that a 1-form $\bar{\omega}\in P_{k+1}^{+}$ meets
\begin{equation}
\mathrm{d}\bar{\omega}\wedge\omega^{1}\wedge\ldots\wedge\omega^{d_{1}}\wedge\rho^{1}\wedge\ldots\wedge\rho^{d_{3}}=0\,,\label{eq:assumption_omega_bar}
\end{equation}
which we have just proven for the 1-forms $\omega^{1},\ldots,\omega^{d_{1}}$,
and consider the expression
\begin{equation}
\mathrm{d}\left(L_{v}\bar{\omega}\right)\wedge\omega^{1}\wedge\ldots\wedge\omega^{d_{1}}\wedge\rho^{1}\wedge\ldots\wedge\rho^{d_{3}}\,.\label{eq:expression_with_Lie_derivative}
\end{equation}
Because of $\mathrm{d}\left(L_{v}\bar{\omega}\right)=L_{v}\left(\mathrm{d}\bar{\omega}\right)$
and the property $L_{v}(\alpha\wedge\beta)=L_{v}(\alpha)\wedge\beta+\alpha\wedge L_{v}(\beta)$
of the Lie derivative for arbitrary (also higher-order) differential
forms $\alpha,\beta$, (\ref{eq:expression_with_Lie_derivative})
can be written as
\begin{equation}
-\mathrm{d}\bar{\omega}\wedge L_{v}\left(\omega^{1}\wedge\ldots\wedge\omega^{d_{1}}\wedge\rho^{1}\wedge\ldots\wedge\rho^{d_{3}}\right)\,,\label{eq:expression_with_Lie_derivative_after_product_rule}
\end{equation}
where we have already used the assumption (\ref{eq:assumption_omega_bar}).
Due to the invariance of $P_{k+1}^{+}=\mathrm{span}\{\omega,\rho\}$
w.r.t. vector fields $v\in\mathrm{span}\{\mathrm{d}f\}^{\perp}$,
(\ref{eq:expression_with_Lie_derivative_after_product_rule}) is of
the form
\begin{equation}
c\,\mathrm{d}\bar{\omega}\wedge\omega^{1}\wedge\ldots\wedge\omega^{d_{1}}\wedge\rho^{1}\wedge\ldots\wedge\rho^{d_{3}}\label{eq:expression_after_using_invariance}
\end{equation}
with some smooth function $c\in C^{\infty}(\mathcal{X}\times\mathcal{U})$.
However, because of the assumption (\ref{eq:assumption_omega_bar})
the expression (\ref{eq:expression_after_using_invariance}) vanishes,
and hence we have shown that (\ref{eq:assumption_omega_bar}) implies
\[
\mathrm{d}\left(L_{v}\bar{\omega}\right)\wedge\omega^{1}\wedge\ldots\wedge\omega^{d_{1}}\wedge\rho^{1}\wedge\ldots\wedge\rho^{d_{3}}=0\,.
\]
Since the 1-forms $\rho$ are Lie derivatives of the 1-forms $\omega$,
(\ref{eq:WedgeProduct_2}) follows and the proof is complete.
\end{proof}

\subsection{\protect\label{subsec:SystemDecompositionsForwardFlatness}System
Decompositions and Forward-Flatness}

As shown in \cite{KolarSchoberlDiwold:2019}, for forward-flat systems
(\ref{eq:sys}) there exists a sequence of repeated triangular decompositions
(\ref{eq:basic_decomposition_flat}) such that after the final step
the subsystem (\ref{eq:decomposition_subsystem}) is empty, i.e.,
$\dim(\bar{x}_{2})=0$. In the following, we will show that such a
sequence of decompositions exists if and only if the sequence of codistributions
(\ref{eq:sequence}) terminates with $P_{\bar{k}}=0$. In fact, straightening
out the integrable codistributions (\ref{eq:sequence}) by the Frobenius
theorem yields the state transformations (\ref{eq:decompostion_state_transformation})
which are required for the decompositions (\ref{eq:basic_decomposition_flat}).
First, we prove that for the original system (\ref{eq:sys}) a decomposition
(\ref{eq:basic_decomposition_flat}) according to Theorem \ref{thm:basic_decomposition_flat}
exists if and only if $\dim(P_{2})<\dim(P_{1})$. In this case, straightening
out $P_{2}$ by a state transformation and performing an additional
input transformation (\ref{eq:decompostion_input_transformation}),
which, as shown below, can be derived by a simple normalization of
the system equations, transforms the system (\ref{eq:sys}) into a
triangular form (\ref{eq:basic_decomposition_flat}).
\begin{thm}
\label{thm:relation_sequence_decomposition}A discrete-time system
(\ref{eq:sys}) can be transformed into a triangular form (\ref{eq:basic_decomposition_flat})
with $\dim(\bar{x}_{1})\geq1$ and $\mathrm{rank}(\partial_{\bar{u}_{1}}f_{1})=\dim(\bar{x}_{1})$
if and only if the corresponding sequence (\ref{eq:sequence}) meets
$\dim(P_{2})<\dim(P_{1})$.
\end{thm}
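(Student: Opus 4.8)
The plan is to prove both implications of the equivalence, and I would begin with an observation that makes the computation coordinate-free. All objects entering Algorithm~\ref{alg:definition_sequence}, namely $\mathrm{span}\{\mathrm{d}x\}$, $\mathrm{span}\{\mathrm{d}f\}$, the annihilator $\mathrm{span}\{\mathrm{d}f\}^{\perp}$, the Lie derivatives, and the backward-shift (\ref{eq:backward_shift_1form}), are preserved under a state- and input transformation of the form (\ref{eq:decomposition_coord_transformation}); in particular $\mathrm{span}\{\mathrm{d}\bar{x}\}=\mathrm{span}\{\mathrm{d}x\}$ because (\ref{eq:decompostion_state_transformation}) depends only on $x$, and $\mathrm{span}\{\mathrm{d}\bar{f}\}=\mathrm{span}\{\mathrm{d}f\}$. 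Hence $\dim(P_{2})$ is unchanged by such transformations, and $\dim(P_{2})=\dim(P_{2}^{+})$ since the backward-shift is a bijection preserving linear independence. This lets me compute $P_{2}$ in whatever coordinates are most convenient for each direction.

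For the direction from the triangular form to the dimension drop, I would assume by the above that the system is already in the form (\ref{eq:basic_decomposition_flat}) and, using $\mathrm{rank}(\partial_{\bar{u}_{1}}f_{1})=\dim(\bar{x}_{1})$, normalize the input so that (\ref{eq:decomposition_feedback}) reads $\bar{x}_{1}^{+}=\hat{u}_{1}$ as in the proof of Lemma~\ref{lem:flatness_properties_basic_decomposition}. Then $\mathrm{span}\{\mathrm{d}f\}=\mathrm{span}\{\mathrm{d}f_{2},\mathrm{d}\hat{u}_{1}\}$, and since $f_{2}$ is independent of $\bar{u}_{1}$, any 1-form of $\mathrm{span}\{\mathrm{d}f\}$ lying in $P_{1}=\mathrm{span}\{\mathrm{d}x\}$ must have vanishing $\mathrm{d}\hat{u}_{1}$- and $\mathrm{d}\bar{u}_{2}$-components, whence $P_{1}\cap\mathrm{span}\{\mathrm{d}f\}\subset\mathrm{span}\{\mathrm{d}f_{2}\}$. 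Because every $v\in\mathrm{span}\{\mathrm{d}f\}^{\perp}$ satisfies $v\rfloor\mathrm{d}f_{2}=0$, the Lie derivative of any 1-form $a_{j}\mathrm{d}f_{2}^{j}$ along $v$ equals $(L_{v}a_{j})\mathrm{d}f_{2}^{j}$ and again lies in $\mathrm{span}\{\mathrm{d}f_{2}\}$; by Proposition~\ref{prop:invariant_extension} the smallest invariant extension therefore stays inside $\mathrm{span}\{\mathrm{d}f_{2}\}$, so $\dim(P_{2})=\dim(P_{2}^{+})\le\dim(\bar{x}_{2})=n-\dim(\bar{x}_{1})<n=\dim(P_{1})$.

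For the converse, suppose $\dim(P_{2})<\dim(P_{1})=n$. Since $P_{2}$ is integrable, I would straighten it by the Frobenius theorem, which provides a state transformation (\ref{eq:decompostion_state_transformation}) with $P_{2}=\mathrm{span}\{\mathrm{d}\bar{x}_{2}\}$ and $\dim(\bar{x}_{1})=n-\dim(P_{2})\ge1$. Forward-shifting gives $P_{2}^{+}=\delta(P_{2})=\mathrm{span}\{\mathrm{d}\bar{x}_{2}^{+}\}=\mathrm{span}\{\mathrm{d}f_{2}\}$, so the $\bar{x}_{2}$-dynamics already take the subsystem form (\ref{eq:decomposition_subsystem}) as a function of $(\bar{x},u)$. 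The key quantitative step is a dimension count: since $P_{2}^{+}\subset\mathrm{span}\{\mathrm{d}f\}$ by item~\ref{enu:Invariance_Cauchy_summarized_2} of Corollary~\ref{cor:Invariance_Cauchy_summarized}, every 1-form of $P_{2}^{+}$ lying in $\mathrm{span}\{\mathrm{d}x\}$ already lies in $\mathrm{span}\{\mathrm{d}x\}\cap\mathrm{span}\{\mathrm{d}f\}=P_{1}\cap\mathrm{span}\{\mathrm{d}f\}\subset P_{2}^{+}$, so that $P_{2}^{+}\cap\mathrm{span}\{\mathrm{d}x\}=P_{1}\cap\mathrm{span}\{\mathrm{d}f\}$. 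For a system without redundant inputs this yields $\mathrm{rank}(\partial_{u}f_{2})=\dim(P_{2}^{+})-\dim(P_{1}\cap\mathrm{span}\{\mathrm{d}f\})=\dim(\bar{x}_{2})-(n-m)=m-\dim(\bar{x}_{1})$.

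Consequently $\ker(\partial_{u}f_{2})$ has dimension $\dim(\bar{x}_{1})$, and by the constant-rank theorem an input transformation (\ref{eq:decompostion_input_transformation}) can be chosen whose new inputs $\bar{u}_{2}$ carry the entire $u$-dependence of $f_{2}$; the complementary inputs $\bar{u}_{1}$ then span $\ker(\partial_{u}f_{2})$, so $\partial_{\bar{u}_{1}}f_{2}=0$ and (\ref{eq:decomposition_subsystem}) holds. Finally, submersivity together with the absence of redundant inputs makes $\partial_{u}f$ injective, hence its restriction to $\ker(\partial_{u}f_{2})$, which is $(\partial_{\bar{u}_{1}}f_{2},\partial_{\bar{u}_{1}}f_{1})=(0,\partial_{\bar{u}_{1}}f_{1})$, is injective as well, giving $\mathrm{rank}(\partial_{\bar{u}_{1}}f_{1})=\dim(\bar{u}_{1})=\dim(\bar{x}_{1})$, as required; the case $\mathrm{rank}(\partial_{u}f)<m$ reduces to this one by first eliminating redundant inputs exactly as in the proof of Theorem~\ref{thm:basic_decomposition_flat}. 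I expect the main obstacle to lie precisely in this converse direction: establishing the identity $P_{2}^{+}\cap\mathrm{span}\{\mathrm{d}x\}=P_{1}\cap\mathrm{span}\{\mathrm{d}f\}$ that pins down $\mathrm{rank}(\partial_{u}f_{2})$, and then constructing the normalizing input transformation and verifying the rank of $\partial_{\bar{u}_{1}}f_{1}$.
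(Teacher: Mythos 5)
Your proposal is correct, and its skeleton coincides with the paper's own proof: necessity by showing that $\mathrm{span}\{\mathrm{d}f_{2}\}$ contains $P_{1}\cap\mathrm{span}\{\mathrm{d}f\}$ and is invariant w.r.t. $\mathrm{span}\{\mathrm{d}f\}^{\perp}$, so that by minimality $P_{2}^{+}\subset\mathrm{span}\{\mathrm{d}f_{2}\}$ and $\dim(P_{2})\leq\dim(\bar{x}_{2})<n$; sufficiency by straightening the integrable codistribution $P_{2}$ with the Frobenius theorem and normalizing the inputs so that $\bar{u}_{2}$ carries the entire $u$-dependence of $f_{2}$. (Your opening observation that the sequence of Algorithm \ref{alg:definition_sequence} is unchanged under transformations (\ref{eq:decomposition_coord_transformation}) is only implicit in the paper, and making it explicit is a useful addition.) The one step where you genuinely deviate is the verification of $\mathrm{rank}(\partial_{\bar{u}_{1}}f_{1})=\dim(\bar{x}_{1})$. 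The paper argues by contradiction: a rank deficiency would yield a 1-form $\omega\in P_{1}\cap\mathrm{span}\{\mathrm{d}f\}$ involving some $\mathrm{d}f_{1}$, hence $\omega\notin\mathrm{span}\{\mathrm{d}f_{2}\}=P_{2}^{+}$, contradicting Step 2 of Algorithm \ref{alg:definition_sequence}; this argument works uniformly for $\mathrm{rank}(\partial_{u}f)\leq m$. You instead establish the identity $P_{2}^{+}\cap\mathrm{span}\{\mathrm{d}x\}=P_{1}\cap\mathrm{span}\{\mathrm{d}f\}$ (both inclusions are justified correctly), turn it into the count $\mathrm{rank}(\partial_{u}f_{2})=m-\dim(\bar{x}_{1})$, and finish with the injectivity of $\partial_{u}f$ restricted to $\ker(\partial_{u}f_{2})$. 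Both verifications hinge on the same containment $P_{1}\cap\mathrm{span}\{\mathrm{d}f\}\subset P_{2}^{+}$, but your quantitative version needs $\mathrm{rank}(\partial_{u}f)=m$ (so that $\dim(P_{1}\cap\mathrm{span}\{\mathrm{d}f\})=n-m$ and $\partial_{u}f$ is injective), which forces the separate reduction for redundant inputs that the paper's contradiction argument avoids. That reduction is the only under-justified point of your proof: it is not literally covered by your opening invariance observation, since after transforming the inputs so that $f$ is independent of the redundant coordinates $\tilde{u}_{1}$ one must still check that computing the sequence with these dummy coordinates present gives the same $P_{2}$ as computing it on the reduced input manifold; this is true because all codistributions involved are invariant w.r.t. the vector fields $\partial_{\tilde{u}_{1}}$ --- precisely the argument used in the paper's proof of Lemma \ref{lem:sequence_subsystem} --- and it should be stated rather than delegated to the proof of Theorem \ref{thm:basic_decomposition_flat}, which concerns flatness and not the sequence (\ref{eq:sequence}).
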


\begin{proof}
\ \\\emph{Necessity:} To prove the necessity of $\dim(P_{2})<\dim(P_{1})$,
we show that for a system of the form (\ref{eq:basic_decomposition_flat})
the condition $P_{2}\subset\mathrm{span}\{\mathrm{d}\bar{x}_{2}\}$
holds.\footnote{Note that $P_{2}$ corresponds to a decomposition (\ref{eq:basic_decomposition_flat})
with the minimal possible dimension of the subsystem (\ref{eq:decomposition_subsystem}).
There may also exist decompositions (\ref{eq:basic_decomposition_flat})
of a system (\ref{eq:sys}) with $\dim(\bar{x}_{2})>\dim(P_{2})$.} Because of $\mathrm{rank}(\partial_{\bar{u}_{1}}f_{1})=\dim(\bar{x}_{1})$
and the fact that the functions $f_{2}$ are independent of $\bar{u}_{1}$,
all linear combinations of the differentials $\mathrm{d}f$ which
are contained in $P_{1}=\mathrm{span}\{\mathrm{d}\bar{x}\}$ are linear
combinations of the differentials $\mathrm{d}f_{2}$ alone. Thus,
a system of the form (\ref{eq:basic_decomposition_flat}) meets
\[
P_{1}\cap\mathrm{span}\{\mathrm{d}f\}=P_{1}\cap\mathrm{span}\{\mathrm{d}f_{2}\}\subset\mathrm{span}\{\mathrm{d}f_{2}\}\,.
\]
Since $\mathrm{span}\{\mathrm{d}f_{2}\}$ contains $P_{1}\cap\mathrm{span}\{\mathrm{d}f\}$
and is obviously invariant w.r.t. $\mathrm{span}\{\mathrm{d}f\}^{\perp}$,
it must also contain $P_{2}^{+}$, which is by definition the smallest
codistribution with these properties. A backward-shift of the relation
$P_{2}^{+}\subset\mathrm{span}\{\mathrm{d}f_{2}\}$ then yields $P_{2}\subset\mathrm{span}\{\mathrm{d}\bar{x}_{2}\}$,
and because of $P_{1}=\mathrm{span}\{\mathrm{d}\bar{x}_{1},\mathrm{d}\bar{x}_{2}\}$
with $\dim(\bar{x}_{1})\geq1$ we immediately get $\dim(P_{2})<\dim(P_{1})$.\\
\emph{Sufficiency:} To prove the sufficiency of $\dim(P_{2})<\dim(P_{1})$,
we show how the coordinate transformation (\ref{eq:decomposition_coord_transformation})
that achieves the decomposition (\ref{eq:basic_decomposition_flat})
can be derived. First, since $P_{2}$ is integrable and $P_{2}\subset\mathrm{span}\{\mathrm{d}x\}$,
there exists a state transformation (\ref{eq:decompostion_state_transformation})
with
\[
\dim(\bar{x}_{1})=\dim(P_{1})-\dim(P_{2})\geq1
\]
such that $P_{2}=\mathrm{span}\{\mathrm{d}\bar{x}_{2}\}$. For the
resulting system\begin{subequations}\label{eq:P2_straightened_out_sys}
\begin{align}
\bar{x}_{2}^{+} & =f_{2}(\bar{x}_{2},\bar{x}_{1},u)\label{eq:P2_straightened_out_subsys}\\
\bar{x}_{1}^{+} & =f_{1}(\bar{x}_{2},\bar{x}_{1},u)\,,\label{eq:P2_straightened_out_feedback}
\end{align}
\end{subequations}we perform an input transformation (\ref{eq:decompostion_input_transformation})
with $\dim(\bar{u}_{2})=\mathrm{rank}(\partial_{u}f_{2})$ such that
$\dim(\bar{u}_{2})$ equations of the subsystem (\ref{eq:P2_straightened_out_subsys})
are simplified to $\bar{x}_{2}^{i_{2},+}=\bar{u}_{2}^{i_{2}}$ (by
just setting $\bar{u}_{2}^{i_{2}}=f_{2}^{i_{2}}(\bar{x}_{2},\bar{x}_{1},u)$).
After this input transformation the system (\ref{eq:P2_straightened_out_sys})
must have the form (\ref{eq:basic_decomposition_flat}) with $f_{2}$
independent of $\bar{u}_{1}$, since otherwise $\mathrm{rank}(\partial_{\bar{u}}f_{2})>\dim(\bar{u}_{2})$.
Now let us show that also the condition $\mathrm{rank}(\partial_{\bar{u}_{1}}f_{1})=\dim(\bar{x}_{1})$
indeed holds. In the case $\mathrm{rank}(\partial_{\bar{u}_{1}}f_{1})<\dim(\bar{x}_{1})$
there would exist a nonzero linear combination of the differentials
$\mathrm{d}f_{1}$ which is contained in $\mathrm{span}\{\mathrm{d}\bar{x},\mathrm{d}\bar{u}_{2}\}$,
and because of $\mathrm{rank}(\partial_{\bar{u}_{2}}f_{2})=\dim(\bar{u}_{2})$
the differentials $\mathrm{d}\bar{u}_{2}$ of this linear combination
could be cancelled out by performing a further linear combination
with the differentials $\mathrm{d}f_{2}$. In other words, there would
exist a linear combination $\omega$ of the differentials $\mathrm{d}f_{1}$
and $\mathrm{d}f_{2}$ which is contained in $P_{1}=\mathrm{span}\{\mathrm{d}\bar{x}\}$
and involves at least one of the differentials $\mathrm{d}f_{1}$.
That is, $\omega\in P_{1}\cap\mathrm{span}\{\mathrm{d}f\}$ but $\omega\notin\mathrm{span}\{\mathrm{d}f_{2}\}$.\footnote{Because of the submersivity property (\ref{eq:submersivity}), a linear
combination which involves at least one of the differentials $\mathrm{d}f_{1}$
cannot be contained in $\mathrm{span}\{\mathrm{d}f_{2}\}$.} However, according to Step 2 of Algorithm \ref{alg:definition_sequence},
with $P_{2}^{+}=\mathrm{span}\{\mathrm{d}f_{2}\}$ due to the above
state transformation we have $P_{1}\cap\mathrm{span}\{\mathrm{d}f\}\subset\mathrm{span}\{\mathrm{d}f_{2}\}$,
which is a contradiction.
\end{proof}
With this theorem, we have established a connection between a first
decomposition step (\ref{eq:basic_decomposition_flat}) for a system
(\ref{eq:sys}) and the sequence (\ref{eq:sequence}). However, for
checking the forward-flatness of a system (\ref{eq:sys}), in general
several decomposition steps are needed. In order to establish a connection
between the sequence (\ref{eq:sequence}) and repeated decompositions
of the form (\ref{eq:basic_decomposition_flat}), we prove the following.
\begin{lem}
\label{lem:sequence_subsystem}Consider a system (\ref{eq:sys}) with
the corresponding sequence (\ref{eq:sequence}) as well as a decomposition
(\ref{eq:basic_decomposition_flat}) such that $P_{2}=\mathrm{span}\{\mathrm{d}\bar{x}_{2}\}$.
If $P_{k}^{\prime}$, $k\geq1$ denotes the sequence (\ref{eq:sequence})
computed for the subsystem (\ref{eq:decomposition_subsystem}) on
a smaller-dimensional manifold with coordinates $(\bar{x}_{2},\bar{x}_{1},\bar{u}_{2})$,
then $P_{k}^{\prime}=P_{k+1}$, $k\geq1$.
\end{lem}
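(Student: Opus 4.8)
The plan is to prove $P_k'=P_{k+1}$ by induction on $k$, working throughout in the decomposed coordinates $(\bar x_2,\bar x_1,\bar u_2,\bar u_1)$ in which the system has the triangular form (\ref{eq:basic_decomposition_flat}), and writing $\pi:(\bar x_2,\bar x_1,\bar u_2,\bar u_1)\mapsto(\bar x_2,\bar x_1,\bar u_2)$ for the projection onto the subsystem manifold $\mathcal N$ carrying the sequence $P_k'$. The base case is immediate: Algorithm \ref{alg:definition_sequence} starts the subsystem sequence with $P_1'=\mathrm{span}\{\mathrm{d}\bar x_2\}$, which by hypothesis equals $P_2$. Along the induction I would simultaneously maintain the auxiliary claim that $P_{k+1}$ is \emph{basic}, i.e. $P_{k+1}=\pi^{*}P_k'$; this holds for $k=1$ since $P_2=\mathrm{span}\{\mathrm{d}\bar x_2\}=\pi^{*}P_1'$, and it is what makes the backward-shift and the Lie derivatives behave well. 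I would also note in passing that the subsystem inherits submersivity from (\ref{eq:submersivity}) through the block structure of $\partial_{(x,u)}f$ together with $\mathrm{rank}(\partial_{\bar u_1}f_1)=\dim(\bar x_1)$, so that $P_k'$ is well defined.

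Given $P_k'=P_{k+1}=\pi^{*}P_k'$, I would match the three steps of Algorithm \ref{alg:definition_sequence}. For Step 1 the key structural fact is that $f_2$ is independent of $\bar u_1$ while $\partial_{\bar u_1}f_1$ has full row rank, so $\mathrm{d}f_1$ is the only source of $\mathrm{d}\bar u_1$ and its $\mathrm{d}\bar u_1$-part is linearly independent; hence any basic 1-form lying in $\mathrm{span}\{\mathrm{d}f\}=\mathrm{span}\{\mathrm{d}f_2,\mathrm{d}f_1\}$ cannot involve $\mathrm{d}f_1$ and therefore lies in $\mathrm{span}\{\mathrm{d}f_2\}$. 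Combined with $P_{k+1}=\pi^{*}P_k'$ this yields $P_{k+1}\cap\mathrm{span}\{\mathrm{d}f\}=\pi^{*}\!\left(P_k'\cap\mathrm{span}\{\mathrm{d}f_2\}\right)$, so the inputs to Step 2 of the two algorithms agree under $\pi^{*}$.

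The main obstacle is Step 2, because the two distributions along which invariance is imposed live on different manifolds and, when $\dim(\bar u_1)>\dim(\bar x_1)$, even have different dimensions. I would resolve this by decomposing $\mathrm{span}\{\mathrm{d}f\}^{\perp}=L\oplus V$, where $V=\{v\in\mathrm{span}\{\partial_{\bar u_1}\}:v\rfloor\mathrm{d}f_1=0\}$ is the purely vertical kernel part of dimension $\dim(\bar u_1)-\dim(\bar x_1)$, and $L$ consists of $\pi$-related lifts of a basis of $\mathrm{span}\{\mathrm{d}f_2\}^{\perp}$ (each $w$ with $w\rfloor\mathrm{d}f_2=0$ is lifted to a $v$ with $\pi_{*}v=w$ and $v\rfloor\mathrm{d}f_1=0$, possible since $\partial_{\bar u_1}f_1$ has full row rank); a dimension count then confirms $\dim L+\dim V=m$. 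The two facts I need, for any basic form $\alpha=\pi^{*}\bar\alpha$, are: (i) $L_{v}\alpha=0$ for $v\in V$, since a vertical field contracted against a basic form and its basic differential vanishes by Cartan's formula; and (ii) $L_{v}\alpha=\pi^{*}(L_{w}\bar\alpha)$ for a lift $v$ of $w$, by naturality of the Lie derivative under $\pi$-related vector fields, so that $L_{v}\alpha$ stays basic. Since Proposition \ref{prop:invariant_extension} builds the invariant extension by iterated Lie derivatives, (i) and (ii) show by induction on the derivative order that the extension along $\mathrm{span}\{\mathrm{d}f\}^{\perp}$ is exactly $\pi^{*}$ of the extension along $\mathrm{span}\{\mathrm{d}f_2\}^{\perp}$, i.e. $P_{k+2}^{+}=\pi^{*}(P_{k+1}')^{+}$, again basic.

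Finally, for Step 3 I would use that $P_{k+2}^{+}=\pi^{*}(P_{k+1}')^{+}\subset\mathrm{span}\{\mathrm{d}f_2\}$ admits, by the discussion following Algorithm \ref{alg:definition_sequence} applied to the subsystem, a basis of the form $\omega_i(f_2)\mathrm{d}f_2^{i}$ whose coefficients depend only on $f_2$. Both the full-system backward shift and the subsystem backward shift act on such 1-forms by the replacement $f_2\mapsto\bar x_2$ of (\ref{eq:backward_shift_1form}), so $\delta^{-1}$ commutes with $\pi^{*}$ here and gives $P_{k+2}=\pi^{*}P_{k+1}'$. Identifying basic forms with forms on $\mathcal N$, this is $P_{k+2}=P_{k+1}'$ and re-establishes the basic property needed to continue the induction, completing the proof. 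I expect Step 2, and specifically the correct treatment of the redundant-input directions $V$, to be the only genuinely delicate point, while Steps 1 and 3 are bookkeeping once the basic/$\pi^{*}$ structure is in place.
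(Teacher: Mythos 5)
Your proof is correct and follows essentially the same route as the paper's: both reduce the claim to showing that the three steps of Algorithm \ref{alg:definition_sequence} yield the same result on the full manifold and on the subsystem manifold, using that $f_{2}$ is independent of $\bar{u}_{1}$ together with $\mathrm{rank}(\partial_{\bar{u}_{1}}f_{1})=\dim(\bar{x}_{1})$ for Step 1, and splitting $\mathrm{span}\{\mathrm{d}f\}^{\perp}$ into a lifted copy of $\mathrm{span}\{\mathrm{d}f_{2}\}^{\perp}$ plus the redundant (vertical) directions, along which invariance is automatic, for Step 2. The only difference is presentational: the paper realizes this splitting by a normalizing input transformation $(\hat{u}_{1},\tilde{u}_{1})$ with $\bar{x}_{1}^{+}=\hat{u}_{1}$, so that the lifts become trivial and the vertical part is $\mathrm{span}\{\partial_{\tilde{u}_{1}}\}$, whereas you construct the $\pi$-related lifts directly.
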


\begin{proof}
For $k=1$, because of $P_{1}^{\prime}=\mathrm{span}\{\mathrm{d}\bar{x}_{2}\}$
it is obvious that $P_{1}^{\prime}=P_{2}$. For $k>1$, the proof
can be reduced to the question whether a computation of the sequence
according to Algorithm \ref{alg:definition_sequence} on a smaller-dimensional
manifold $\mathcal{X}\times\mathcal{U}_{2}$ with coordinates $(\bar{x}_{2},\bar{x}_{1},\bar{u}_{2})$
and $\mathrm{d}f_{2}$ instead of $\mathrm{d}f$ yields the same result
as a computation on the original manifold $\mathcal{X}\times\mathcal{U}$
with the additional coordinates $\bar{u}_{1}$ and $\mathrm{d}f$.
In Step 1 of Algorithm \ref{alg:definition_sequence}, because of
$\mathrm{rank}(\partial_{\bar{u}_{1}}f_{1})=\dim(\bar{x}_{1})$ and
the fact that the functions $f_{2}$ are independent of $\bar{u}_{1}$,
it does not make a difference whether a codistribution $P_{k}\subset\mathrm{\mathrm{span}}\{\mathrm{d}\bar{x}\}$
is intersected with $\mathrm{\mathrm{span}}\{\mathrm{d}f\}$ or $\mathrm{\mathrm{span}}\{\mathrm{d}f_{2}\}$.
Regarding Step 2, assume there has been performed an additional input
transformation $(\hat{u}_{1},\tilde{u}_{1})=\Phi_{u}(\bar{x},\bar{u})$
which replaces $\bar{u}_{1}$ such that the equations (\ref{eq:decomposition_feedback})
are simplified to the form $\bar{x}_{1}^{i_{1},+}=\hat{u}_{1}^{i_{1}}$,
$i_{1}=1,\ldots,\dim(\bar{x}_{1})$. Because of $\mathrm{rank}(\partial_{\bar{u}_{1}}f_{1})=\dim(\bar{x}_{1})$
this is always possible. Then it can be observed that in contrast
to the distribution $\mathrm{\mathrm{span}}\{\mathrm{d}f_{2}\}^{\perp}$
on $\mathcal{X}\times\mathcal{U}_{2}$, the distribution $\mathrm{\mathrm{span}}\{\mathrm{d}f\}^{\perp}$
on $\mathcal{X}\times\mathcal{U}$ is larger since it contains the
additional vector fields $\partial_{\tilde{u}_{1}}$ corresponding
to the redundant inputs $\tilde{u}_{1}$ (provided that the system
(\ref{eq:sys}) has redundant inputs, $\dim(\tilde{u}_{1})=m-\mathrm{rank}(\partial_{u}f)$).
However, since the considered codistributions are invariant w.r.t.
the vector fields $\partial_{\tilde{u}_{1}}$ anyway, also Step 2
yields the same result in both cases. Finally, since Step 3 consists
only in a backward-shift, indeed $P_{k}^{\prime}=P_{k+1}$, $k\geq1$.
\end{proof}
With Lemma \ref{lem:sequence_subsystem}, we can now prove our main
result.
\begin{thm}
\label{thm:test_forward_flatness}A system (\ref{eq:sys}) is forward-flat
if and only if the sequence (\ref{eq:sequence}) terminates with $P_{\bar{k}}=0$.
\end{thm}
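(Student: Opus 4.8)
The plan is to prove the equivalence by induction on the system dimension, using the tight connection between the termination of the sequence $(P_k)$ and the existence of a full sequence of triangular decompositions, and then invoking the flatness characterization of such decompositions. The overarching strategy rests on three ingredients already established: Theorem~\ref{thm:relation_sequence_decomposition} (a single decomposition step is possible iff $\dim(P_2)<\dim(P_1)$), Lemma~\ref{lem:sequence_subsystem} (the sequence computed for the subsystem~(\ref{eq:decomposition_subsystem}) is just the original sequence shifted by one index, i.e. $P_k'=P_{k+1}$), and Lemma~\ref{lem:flatness_properties_basic_decomposition} (the full system~(\ref{eq:basic_decomposition_flat}) is forward-flat iff its subsystem is). Together these let me transfer both the flatness property and the sequence termination from the original system to a strictly smaller subsystem.

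First I would set up the induction on $n=\dim(x)$, treating the trivial case $n=0$ (the empty system, which is forward-flat, and for which the sequence has already terminated with $P_{\bar k}=0$) as the base. For the inductive step I would argue both directions simultaneously by reducing to the subsystem. Suppose the sequence terminates with $P_{\bar k}=0$. Since $P_1=\mathrm{span}\{\mathrm{d}x\}\neq 0$ while the sequence reaches $0$, there must be a first strict drop, giving $\dim(P_2)<\dim(P_1)$; by Theorem~\ref{thm:relation_sequence_decomposition} the system admits a decomposition~(\ref{eq:basic_decomposition_flat}) with $P_2=\mathrm{span}\{\mathrm{d}\bar x_2\}$ and $\dim(\bar x_1)\geq 1$, so the subsystem~(\ref{eq:decomposition_subsystem}) has strictly smaller state dimension $\dim(\bar x_2)=\dim(P_2)<n$. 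By Lemma~\ref{lem:sequence_subsystem}, the subsystem's sequence is $P_k'=P_{k+1}$, which therefore also terminates with $P_{\bar k-1}'=P_{\bar k}=0$. The induction hypothesis yields that the subsystem is forward-flat, and then Lemma~\ref{lem:flatness_properties_basic_decomposition} upgrades this to forward-flatness of the full system.

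For the converse, I would assume the system is forward-flat and show the sequence terminates at $0$. Here the key structural input from \cite{KolarSchoberlDiwold:2019}, stated at the opening of Section~\ref{subsec:SystemDecompositionsForwardFlatness}, is that a forward-flat system admits a decomposition~(\ref{eq:basic_decomposition_flat}) with $\dim(\bar x_1)\geq 1$; by the necessity direction of Theorem~\ref{thm:relation_sequence_decomposition} this forces $\dim(P_2)<\dim(P_1)$, so the subsystem again has state dimension $\dim(P_2)<n$ and is itself forward-flat by Lemma~\ref{lem:flatness_properties_basic_decomposition}. The induction hypothesis gives that the subsystem's sequence terminates at $0$, and via $P_k'=P_{k+1}$ this means the original sequence reaches $0$ as well. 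The monotone nested structure~(\ref{eq:sequence}) together with the finiteness guaranteed by the dimension bound $k\leq n-p$ from Proposition~\ref{prop:invariant_extension} ensures that ``terminating'' and ``reaching $0$'' are compatible, i.e. the fixed point $P_{\bar k+1}=P_{\bar k}$ that halts the algorithm is indeed $P_{\bar k}=0$ precisely in the flat case.

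I expect the main obstacle to lie in cleanly matching the stopping condition of Algorithm~\ref{alg:definition_sequence} with the inductive reduction. Concretely, one must ensure that if the subsystem's sequence stabilizes at a \emph{nonzero} fixed point (signalling non-flatness), this propagates correctly upward so that the original sequence stabilizes at the same nonzero codistribution and hence does \emph{not} reach $0$; this is the contrapositive that makes the ``only if'' direction airtight and requires the index shift of Lemma~\ref{lem:sequence_subsystem} to be valid not merely at a single step but compatibly with stabilization. A secondary subtlety is verifying that the redundant-input case $\dim(\bar u_1)>\dim(\bar x_1)$ of Lemma~\ref{lem:flatness_properties_basic_decomposition} does not disturb the sequence, which is exactly the content of the last part of Lemma~\ref{lem:sequence_subsystem} (the extra vector fields $\partial_{\tilde u_1}$ leave the invariant extensions unchanged). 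Once these bookkeeping points are handled, the induction closes.
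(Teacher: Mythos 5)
Your proposal is correct and takes essentially the same route as the paper: it combines Theorem \ref{thm:basic_decomposition_flat}, Lemma \ref{lem:flatness_properties_basic_decomposition}, Theorem \ref{thm:relation_sequence_decomposition}, and Lemma \ref{lem:sequence_subsystem} in exactly the roles the paper uses them, merely formalizing the paper's ``repeated decomposition'' argument as an induction on the state dimension (with the first strict drop $\dim(P_2)<\dim(P_1)$ and the index shift $P'_k=P_{k+1}$ carrying both the flatness property and the termination at $0$ between the system and its subsystem).
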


\begin{proof}
From Theorem \ref{thm:basic_decomposition_flat} and Lemma \ref{lem:flatness_properties_basic_decomposition},
it is clear that a system (\ref{eq:sys}) is forward-flat if and only
if it can be decomposed repeatedly until in some step the subsystem
(\ref{eq:decomposition_subsystem}) is trivial with $\dim(\bar{x}_{2})=0$.
Because of Theorem \ref{thm:relation_sequence_decomposition} and
Lemma \ref{lem:sequence_subsystem}, the existence of these repeated
decompositions can be checked by computing the codistributions of
sequence (\ref{eq:sequence}). They correspond to the codistributions
spanned by the differentials of the state variables of the successively
computed subsystems (\ref{eq:decomposition_subsystem}).\footnote{Assuming that in every step the decomposition is performed such that
$\dim(\bar{x}_{2})$ is minimal.} Thus, the last subsystem (\ref{eq:decomposition_subsystem}) is trivial
if and only if $P_{\bar{k}}=0$.
\end{proof}
If Theorem \ref{thm:test_forward_flatness} confirms the forward-flatness
of a system (\ref{eq:sys}), a flat output can be obtained systematically
by performing the repeated decompositions (\ref{eq:basic_decomposition_flat})
which are induced by the sequence (\ref{eq:sequence}) and applying
Lemma \ref{lem:flatness_properties_basic_decomposition}. These decompositions
can be derived by successively straightening out the integrable codistributions
$P_{k}$, $k\geq2$ by state transformations (\ref{eq:decompostion_state_transformation})
and transforming the resulting systems (\ref{eq:P2_straightened_out_sys})
into the form (\ref{eq:basic_decomposition_flat}) by further input
transformations (\ref{eq:decompostion_input_transformation}), as
it is shown in the sufficiency part of the proof of Theorem \ref{thm:relation_sequence_decomposition}
for the first decomposition step with $k=2$. A flat output of the
last, trivial subsystem (\ref{eq:decomposition_subsystem}) with $\dim(\bar{x}_{2})=0$
corresponding to $P_{\bar{k}}=0$ is given by its inputs $(\bar{x}_{1},\bar{u}_{2})$,
and by adding the redundant inputs $\tilde{u}_{1}$ according to item
\ref{enu:case_redundant_inputs_basic_decomposition} of Lemma \ref{lem:flatness_properties_basic_decomposition}
for all decomposition steps, a flat output $y$ of the original system
(\ref{eq:sys}) can be obtained. To get the flat output in original
coordinates $(x,u)$, it is of course necessary to apply the corresponding
inverse coordinate transformations.
\begin{rem}
Systems which are linearizable by static feedback are contained in
the class of forward-flat systems, and hence the sequence (\ref{eq:sequence})
also terminates with $P_{\bar{k}}=0$. However, Step 2 of Algorithm
\ref{alg:definition_sequence} is always trivial since $P_{k}\cap\mathrm{span}\{\mathrm{d}f\}$
is already invariant and hence $P_{k+1}^{+}=P_{k}\cap\mathrm{span}\{\mathrm{d}f\}$
for all $k\geq1$, cf. also Remark \ref{rem:static_feedback_decomposition}.
\end{rem}

\section{\protect\label{sec:Example}Examples}

In this section, we illustrate our results by two examples.

\subsection{Academic Example}

Consider the system
\begin{equation}
\begin{aligned}x^{1,+} & =x^{2}(u^{1}+1)\,, & x^{4,+} & =x^{5}+1-\tfrac{x^{1}(u^{1}+1)}{x^{2}+1}\\
x^{2,+} & =u^{1}\,, & x^{5,+} & =x^{2}+u^{2}\\
x^{3,+} & =x^{4}+u^{2}-1\,,
\end{aligned}
\label{eq:academic_ex_sys}
\end{equation}
with the equilibrium $x_{0}=(0,0,0,1,0)$ and $u_{0}=(0,0)$, which
is not static feedback linearizable and has also been studied in \cite{KolarSchoberlSchlacher:2016-2}
in the context of implicit system decompositions. To check its forward-flatness,
we compute the sequence (\ref{eq:sequence}) according to Algorithm
\ref{alg:definition_sequence}. The first codistribution of (\ref{eq:sequence})
is given by
\[
P_{1}=\mathrm{span}\{\mathrm{d}x\}=\mathrm{span}\{\mathrm{d}x^{1},\mathrm{d}x^{2},\mathrm{d}x^{3},\mathrm{d}x^{4},\mathrm{d}x^{5}\}\,.
\]
As explained before Algorithm \ref{alg:definition_sequence}, it is
convenient to perform the computations in adapted coordinates
\begin{equation}
\begin{array}{ccl}
\theta^{i} & = & f^{i}(x,u)\,,\quad i=1,\ldots,n\\
\xi^{1} & = & x^{1}\\
\xi^{2} & = & x^{3}\,,
\end{array}\label{eq:academic_ex_adapted_coordinates}
\end{equation}
where the functions $h(x,u)$ of (\ref{eq:adapted_coordinates}) have
been chosen such that the transformation is invertible. For $k=1$,
the intersection of Step 1 yields
\begin{multline*}
P_{1}\cap\mathrm{span}\{\mathrm{d}\theta\}=\mathrm{span}\{(\theta^{2}+1)\mathrm{d}\theta^{1}-\theta^{1}\mathrm{d}\theta^{2},\mathrm{d}\theta^{3}-\mathrm{d}\theta^{5},\\
\xi(\theta^{2}+1)\mathrm{d}\theta^{2}+\left(\theta^{1}+\theta^{2}+1\right)\mathrm{d}\theta^{4}\}\,,
\end{multline*}
which is not yet invariant w.r.t. the distribution $\mathrm{span}\{\mathrm{d}\theta\}^{\perp}=\mathrm{span}\{\partial_{\xi}\}$.
However, adding in Step 2 the 1-form $(\theta^{2}+1)\mathrm{d}\theta^{2}$,
which is the Lie derivative of the last 1-form of the above basis
of $P_{1}\cap\mathrm{span}\{\mathrm{d}\theta\}$ w.r.t. the vector
field $\partial_{\xi^{1}}$, results in an invariant codistribution
$P_{2}^{+}$. A basis for $P_{2}^{+}$ which is independent of the
coordinates $\xi$ is given by\textbf{\emph{
\[
P_{2}^{+}=\mathrm{span}\{\mathrm{d}\theta^{1},\mathrm{d}\theta^{2},\mathrm{d}\theta^{3}-\mathrm{d}\theta^{5},\mathrm{d}\theta^{4}\}\,,
\]
}}and because of $\theta^{i}=f^{i}(x,u)$, the backward-shift of Step
3 yields $P_{2}$ in original coordinates as
\[
P_{2}=\mathrm{span}\{\mathrm{d}x^{1},\mathrm{d}x^{2},\mathrm{d}x^{3}-\mathrm{d}x^{5},\mathrm{d}x^{4}\}\,.
\]
Continuing Algorithm \ref{alg:definition_sequence} with $k=2$ and
using again the adapted coordinates (\ref{eq:academic_ex_adapted_coordinates}),
Step 1 yields
\[
P_{2}\cap\mathrm{span}\{\mathrm{d}\theta\}=\mathrm{span}\{(\theta^{2}+1)\mathrm{d}\theta^{1}-\theta^{1}\mathrm{d}\theta^{2},\mathrm{d}\theta^{3}-\mathrm{d}\theta^{5}\}\,,
\]
which is already invariant w.r.t. $\mathrm{span}\{\mathrm{d}\theta\}^{\perp}=\mathrm{span}\{\partial_{\xi}\}$.
Thus, Step 2 is trivial with $P_{3}^{+}=P_{2}\cap\mathrm{span}\{\mathrm{d}\theta\}$,
and after the backward-shift of Step 3 we get
\[
P_{3}=\mathrm{span}\{(x^{2}+1)\mathrm{d}x^{1}-x^{1}\mathrm{d}x^{2},\mathrm{d}x^{3}-\mathrm{d}x^{5}\}
\]
in original coordinates. Finally, for $k=3$, the intersection of
Step 1 yields $P_{3}\cap\mathrm{span}\{\mathrm{d}\theta\}=0$. Hence,
$P_{4}=0$ and according to Theorem \ref{thm:test_forward_flatness}
the system (\ref{eq:academic_ex_sys}) is forward-flat. As discussed
at the end of Section \ref{subsec:SystemDecompositionsForwardFlatness},
a flat output can be obtained systematically by performing repeated
system decompositions (\ref{eq:basic_decomposition_flat}) which are
induced by the sequence $P_{4}\subset P_{3}\subset P_{2}\subset P_{1}$.
For this purpose, the codistributions of the sequence $P_{4}\subset P_{3}\subset P_{2}\subset P_{1}$
are straightened out based on the Frobenius theorem with the state
transformation
\begin{equation}
\begin{array}{ccl}
\bar{x}_{3}^{1} & = & \tfrac{x^{1}}{x^{2}+1}\\
\bar{x}_{3}^{2} & = & x^{3}-x^{5}\\
\bar{x}_{2}^{1} & = & x^{4}\\
\bar{x}_{2}^{2} & = & x^{2}\\
\bar{x}_{1}^{1} & = & x^{5}\,.
\end{array}\label{eq:academic_ex_state_transformation}
\end{equation}
In these coordinates, the codistributions are given by
\[
\begin{aligned}P_{1} & =\mathrm{span}\{\mathrm{d}\bar{x}_{3}^{1},\mathrm{d}\bar{x}_{3}^{2},\mathrm{d}\bar{x}_{2}^{1},\mathrm{d}\bar{x}_{2}^{2},\mathrm{d}\bar{x}_{1}^{1}\}\\
P_{2} & =\mathrm{span}\{\mathrm{d}\bar{x}_{3}^{1},\mathrm{d}\bar{x}_{3}^{2},\mathrm{d}\bar{x}_{2}^{1},\mathrm{d}\bar{x}_{2}^{2}\}\\
P_{3} & =\mathrm{span}\{\mathrm{d}\bar{x}_{3}^{1},\mathrm{d}\bar{x}_{3}^{2}\}\\
P_{4} & =0\,,
\end{aligned}
\]
and the transformed system reads
\begin{equation}
\begin{aligned}\bar{x}_{3}^{1,+} & =\bar{x}_{2}^{2}\\
\bar{x}_{3}^{2,+} & =\bar{x}_{2}^{1}-1-\bar{x}_{2}^{2}\\
\bar{x}_{2}^{1,+} & =\bar{x}_{1}^{1}+1-\bar{x}_{3}^{1}(u^{1}+1)\\
\bar{x}_{2}^{2,+} & =u^{1}\\
\bar{x}_{1}^{1,+} & =\bar{x}_{2}^{2}+u^{2}\,.
\end{aligned}
\label{eq:academic_ex_transformed_sys}
\end{equation}
In fact, we can see here already a triangular structure corresponding
to repeated decompositions of the form (\ref{eq:basic_decomposition_flat}).
The subsystem (\ref{eq:decomposition_subsystem}) of the first decomposition
step has state variables $(\bar{x}_{3}^{1},\bar{x}_{3}^{2},\bar{x}_{2}^{1},\bar{x}_{2}^{2})$
and inputs $(\bar{x}_{1}^{1},u^{1})$, the subsystem (\ref{eq:decomposition_subsystem})
of the second decomposition step has state variables $(\bar{x}_{3}^{1},\bar{x}_{3}^{2})$
and inputs $(\bar{x}_{2}^{1},\bar{x}_{2}^{2})$, and after the third
decomposition step the remaining subsystem (\ref{eq:decomposition_subsystem})
is trivial with no state and inputs $(\bar{x}_{3}^{1},\bar{x}_{3}^{2})$.\footnote{It should be noted that achieving decompositions of the form (\ref{eq:basic_decomposition_flat})
requires in general not only state- but also input transformations,
cf. Theorem \ref{thm:basic_decomposition_flat}. In the sufficiency
part of the proof of Theorem \ref{thm:relation_sequence_decomposition},
it is shown how to derive such input transformations in a straightforward
way. Furthermore, if e.g. in the second decomposition step an input
transformation for the subsystem (\ref{eq:decomposition_subsystem})
is also applied to the equations (\ref{eq:decomposition_feedback})
of the complete system (which is not necessary at all for computing
flat outputs), it does in general not preserve the state representation
of the latter (since besides original input variables also the state
variables $\bar{x}_{1}$ of the complete system (\ref{eq:basic_decomposition_flat})
serve as input variables for the subsystem (\ref{eq:decomposition_subsystem})).
Hence, combining the coordinate transformations of all decomposition
steps and applying the resulting transformation to the original system
(\ref{eq:sys}) yields in general a structurally flat implicit triangular
system representation, and not necessarily an explicit one like (\ref{eq:academic_ex_transformed_sys}).
However, it still would allow to read off a flat output and systematically
determine the parameterization (\ref{eq:flat_param}). } Based on this triangular structure, it can be verified that $y=(\bar{x}_{3}^{1},\bar{x}_{3}^{2})$
is a flat output. The first and the second equation of (\ref{eq:academic_ex_transformed_sys})
can be used to calculate $\bar{x}_{2}^{1}$ and $\bar{x}_{2}^{2}$
as a function of $y$ and its forward-shifts. In a second step, the
parameterization of $\bar{x}_{1}^{1}$ and $u^{1}$ can be calculated
from the equations three and four of (\ref{eq:academic_ex_transformed_sys}).
Finally, from the last equation of (\ref{eq:academic_ex_transformed_sys}),
we also get $u^{2}$ as a function of $y$ and its forward-shifts,
which completes the map (\ref{eq:flat_param}). By applying the inverse
transformation of (\ref{eq:academic_ex_state_transformation}), the
flat output in original coordinates can be obtained as $y=(\frac{x^{1}}{x^{2}+1},x^{3}-x^{5})$.
However, it is important to emphasize again that for only checking
whether the system is forward-flat or not, in contrast to the approach
proposed in \cite{KolarSchoberlDiwold:2019}, the calculation of a
flat output is not required. Computing the sequence (\ref{eq:sequence})
and applying Theorem \ref{thm:test_forward_flatness} is sufficient.

\subsection{VTOL Aircraft}

As a second example, let us consider the planar VTOL aircraft discussed
e.g. in \cite{Sastry:1999} or \cite{FliessLevineMartinRouchon:1999},
which is described by the continuous-time dynamics
\begin{equation}
\begin{array}{ll}
\dot{x}=v_{x}\,,\quad & \dot{v}_{x}=\varepsilon\cos(\theta)u^{2}-\sin(\theta)u^{1}\\
\dot{z}=v_{z}\,, & \dot{v}_{z}=\cos(\theta)u^{1}+\varepsilon\sin(\theta)u^{2}-1\\
\dot{\theta}=\omega\,, & \dot{\omega}=u^{2}\,.
\end{array}\label{eq:VTOL_cont}
\end{equation}
It is well-known that this system is flat with a flat output $y=(x-\varepsilon\sin(\theta),z+\varepsilon\cos(\theta))$.
By combining an Euler discretization for some sampling time $T_{s}>0$
with a suitable prior state transformation which transforms (\ref{eq:VTOL_cont})
into a structurally flat triangular form as it is shown in \cite{DiwoldKolarSchoberl:2022}
for a gantry crane, a flat discrete-time system
\begin{equation}
\begin{aligned}x^{1,+} & =x^{1}+T_{s}x^{3}\\
x^{2,+} & =x^{2}+T_{s}x^{4}\\
x^{3,+} & =x^{3}+T_{s}\sin(x^{5})(\varepsilon(x^{6})^{2}-u^{1})\\
x^{4,+} & =x^{4}+T_{s}\cos(x^{5})(-\varepsilon(x^{6})^{2}+u^{1})-T_{s}\\
x^{5,+} & =x^{5}+T_{s}x^{6}\\
x^{6,+} & =x^{6}+T_{s}u^{2}
\end{aligned}
\label{eq:VTOL_disc}
\end{equation}
with a flat output $y=(x^{1},x^{2})$ can be obtained. Computing the
sequence (\ref{eq:sequence}) for this system yields
\begin{equation}
\begin{aligned}P_{1} & =\mathrm{span}\{\mathrm{d}x^{1},\mathrm{d}x^{2},\mathrm{d}x^{3},\mathrm{d}x^{4},\mathrm{d}x^{5},\mathrm{d}x^{6}\}\\
P_{2} & =\mathrm{span}\{\mathrm{d}x^{1},\mathrm{d}x^{2},\mathrm{d}x^{3},\mathrm{d}x^{4},\mathrm{d}x^{5}\}\\
P_{3} & =\mathrm{span}\{\mathrm{d}x^{1},\mathrm{d}x^{2},\mathrm{d}x^{3},\mathrm{d}x^{4}\}\\
P_{4} & =\mathrm{span}\{\mathrm{d}x^{1},\mathrm{d}x^{2}\}\\
P_{5} & =0\,,
\end{aligned}
\label{eq:VTOL_sequence}
\end{equation}
which confirms its forward-flatness. The fact that the codistributions
(\ref{eq:VTOL_sequence}) are already straightened out is due to the
structurally flat triangular form of (\ref{eq:VTOL_disc}). For $k=1$
and $k=2$, Step 2 of Algorithm \ref{alg:definition_sequence} is
nontrivial, since $P_{1}\cap\mathrm{span}\{\mathrm{d}f\}$ and $P_{2}\cap\mathrm{span}\{\mathrm{d}f\}$
are not invariant w.r.t. $\mathrm{span}\{\mathrm{d}f\}^{\perp}$ and
a Lie derivative has to be added. Thus, the system is not static feedback
linearizable.

\section{\protect\label{sec:Conclusion}Conclusion}

We have derived a test for forward-flatness based on a unique sequence
of integrable codistributions (\ref{eq:sequence}), which can be considered
as dual version of the test proposed in \cite{KolarDiwoldSchoberl:2019}.
The sequence of integrable codistributions gives rise to repeated
system decompositions of the form (\ref{eq:basic_decomposition_flat}),
where the complete system is forward-flat if and only if the subsystem
(\ref{eq:decomposition_subsystem}) is forward-flat. The dimension
of the last codistribution of the sequence corresponds to the dimension
of the last subsystem, for which no further decomposition exists.
Since the existence of such decompositions is a necessary condition
for forward-flatness, the original system (\ref{eq:sys}) is forward-flat
if and only if the last codistribution has dimension zero. The only
additional effort compared to a test for static feedback linearizability
consists in the computation of the smallest invariant codistributions
in Step 2 of Algorithm \ref{alg:definition_sequence}, which can be
achieved in a straightforward way by adding suitable Lie derivatives
of 1-forms. For static feedback linearizable systems this is trivial,
since already the codistributions of Step 1 are invariant. The computational
effort is also lower than in \cite{KolarDiwoldSchoberl:2019}, where
compared to a static feedback linearization test an additional calculation
of largest projectable subdistributions is required. The computation
of the smallest invariant codistributions in the presented dual approach
by just adding Lie derivatives of 1-forms can be considered here as
the simpler task. Furthermore, the calculations in \cite{KolarDiwoldSchoberl:2019}
involve two manifolds, whereas here all calculations are performed
on only one manifold. Future research will address extensions of the
presented results to flatness in the more general sense of \cite{DiwoldKolarSchoberl:2020},
pursuing similar ideas as in \cite{Kaldmae:2021} while trying to
keep the computational effort as low as possible.

\begin{ack}                               
This research was funded in whole, or in part, by the Austrian Science Fund (FWF) P36473. For the purpose of open access, the author has applied a CC BY public copyright licence to any Author Accepted Manuscript version arising from this submission.
\end{ack}

\bibliographystyle{plain}        
\bibliography{Bibliography}



\end{document}